\documentclass[a4paper,reqno]{amsart}

\usepackage{amssymb}
\usepackage{amstext}
\usepackage{amsmath}
\usepackage{amsthm}
\usepackage{amsfonts}
\usepackage{enumerate}
\usepackage{graphicx}
\usepackage{latexsym}
\usepackage{mathrsfs}
\usepackage{mathtools}
\usepackage{paralist}
\usepackage{xcolor}
\usepackage[colorlinks=true,linkcolor=blue!55!black,citecolor=blue!55!black,urlcolor=blue!55!black]{hyperref}

\hypersetup{
  pdftitle={Self-Orthogonal Tau-Tilting Modules and Tilting Modules II: Annihilator Separation},
  pdfauthor={Xiaojin Zhang},
  pdfsubject={Annihilators and torsion separation for self-orthogonal tau-tilting modules},
  pdfkeywords={tau-tilting module, self-orthogonal module, annihilator, torsion class, triangular matrix algebra, one-point extension}
}

\newtheorem{theorem}{Theorem}[section]
\newtheorem{corollary}[theorem]{Corollary}
\newtheorem{lemma}[theorem]{Lemma}
\newtheorem{proposition}[theorem]{Proposition}

\newtheorem{question}[theorem]{Question}
\theoremstyle{definition}
\newtheorem{definition}[theorem]{Definition}
\newtheorem{remark}[theorem]{Remark}
\newtheorem{example}[theorem]{Example}
\newtheorem*{theorema}{Theorem A}
\newtheorem*{theoremb}{Theorem B}
\newtheorem*{theoremc}{Theorem C}
\newtheorem*{theoremd}{Theorem D}

\newcommand{\Ext}{\operatorname{Ext}\nolimits}
\newcommand{\Hom}{\operatorname{Hom}\nolimits}
\newcommand{\End}{\operatorname{End}\nolimits}
\newcommand{\Ann}{\operatorname{Ann}\nolimits}
\newcommand{\rad}{\operatorname{rad}\nolimits}
\newcommand{\soc}{\operatorname{soc}\nolimits}
\newcommand{\topm}{\operatorname{top}\nolimits}
\newcommand{\supp}{\operatorname{supp}\nolimits}
\newcommand{\add}{\mathsf{add}\hspace{.01in}}
\newcommand{\Fac}{\mathsf{Fac}\hspace{.01in}}
\newcommand{\Filt}{\mathsf{Filt}\hspace{.01in}}
\newcommand{\Sub}{\mathsf{Sub}\hspace{.01in}}
\newcommand{\Tors}{\mathsf{Tors}\hspace{.01in}}
\newcommand{\Sim}{\mathsf{Sim}\hspace{.01in}}
\newcommand{\modA}{\mathsf{mod}\hspace{.01in}A}
\newcommand{\findim}{\operatorname{findim}\nolimits}
\newcommand{\perpall}[1]{{}^{\perp_{\geq 0}}#1}

\numberwithin{equation}{section}

\begin{document}

\title[Annihilator separation]{Self-Orthogonal Tau-Tilting Modules and Tilting Modules II: Annihilator Separation}
\thanks{2020 Mathematics Subject Classification: 16G10, 16E30, 16D90.}
\thanks{Keywords: $\tau$-tilting module, self-orthogonal module,
annihilator, torsion class, tilting module.}

\author{Xiaojin Zhang}
\address{X. Zhang: School of Mathematics and Statistics, Jiangsu Normal
University, Xuzhou, 221116, P. R. China}
\email{xjzhang@jsnu.edu.cn, xjzhangmaths@163.com}

\begin{abstract}
Let $A$ be an Artin algebra and let $T$ be a self-orthogonal
$\tau$-tilting right $A$-module. Set $I=\Ann_A(T)$. We prove that
\[
       \Hom_A(I,T)=0=\Ext_A^n(I,T)\qquad(n\geq 1).
\]
It follows that the torsion class generated by $I$ is Hom-orthogonal
to $\Sub T$. This separation yields faithfulness criteria expressed
through ideal tops, socles, and projective supports. We also prove a
finitistic-dimension obstruction: if
$B=\End_A(T)^{\rm op}$ has finite little finitistic dimension, then
$\Fac T\cap{}^{\perp_{\geq0}}T=\{0\}$. As an application, the
self-orthogonal $\tau$-tilting conjecture holds for
radical square zero Artin algebras.
Finally, the support criteria apply to semisimple-source triangular
matrix algebras with arbitrary local terminal blocks.
\end{abstract}

\maketitle

\section{Introduction}

Let $A$ be an Artin algebra. A basic $\tau$-tilting $A$-module is
sincere, but it need not be faithful. Its annihilator measures the
precise gap: a $\tau$-tilting module $T$ is a $1$-tilting
$A$-module if and only if $\Ann_A(T)=0$. Moreover, $T$ is always a
$1$-tilting module over $A/\Ann_A(T)$
\cite[Lemma~2.4 and Proposition~2.5]{CLZZ}; see also \cite{AIR}. Recall that a module $T$ is \emph{self-orthogonal} if
$\Ext_A^n(T,T)=0$ for all $n\geq 1$.
This makes the following problem natural.

\medskip
\noindent
\emph{{\bf Question}: When does self-orthogonality of a $\tau$-tilting module force
faithfulness?}
\medskip

Several homological approaches to this problem are known. Zhang proved
that a $\tau$-tilting module $T$ is $1$-tilting precisely when
$\Ext_A^i(T,\Fac T)=0$ for all $i\geq1$; in particular, finite
projective dimension together with self-orthogonality forces tilting
\cite{Zhang}. Related vanishing characterizations for sincere
silting modules over perfect rings were obtained in \cite{LiuWei}.
More comparison criteria use second extensions,
$\operatorname{Tor}_1$-vanishing for the annihilator, and delooping
levels \cite{CLZZ}. Recently, the author combined $\tau$-tilting modules with $\tau^{-1}$-tilting modules and showed that self-orthogonal $\tau$-tilting-$\tau^{-1}$-tilting modules are tilting modules \cite{Zhang2}.

In fact, we conjectured that a self-orthogonal $\tau$-tilting module is a tilting module \cite{Zhang}, which is called the self-orthogonal $\tau$-tilting conjecture in \cite{CLZZ}. Moreover, it is shown that the conjecture holds for algebras of finite global dimension \cite{Zhang}, gentle algebras \cite{C}, Gorenstein CM-finite algebras \cite{LyW}, algebras of finite representation type and minimal representation infinite algebras \cite{CLZZ}. In general, the conjecture is open.

The present paper develops a contravariant approach:
the annihilator is placed in the first variable of $\Hom_A(-,T)$ and
$\Ext_A^n(-,T)$. The resulting degree-zero vanishing is what makes
the torsion-theoretic separation below possible.

Write $I=\Ann_A(T)$. Our starting point is the following complete
orthogonality statement.

\begin{theorema}
If $T$ is a self-orthogonal and $\tau$-tilting $A$-module, then
\[
                  \Hom_A(I,T)=0
       \quad\text{and}\quad
                  \Ext_A^n(I,T)=0\quad(n\geq1).
\]
Consequently,
\[
 I\in\Sub T\quad\Longleftrightarrow\quad I=0
 \quad\Longleftrightarrow\quad T\text{ is }1\text{-tilting}.
\]
\end{theorema}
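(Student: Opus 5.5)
The plan is to reduce everything to the vanishing of $\Ext_A^n(A/I,T)$ for $n\geq1$, and to get that vanishing from an $\add T$-coresolution of $A/I$ together with self-orthogonality of $T$.

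\emph{Step 1: a coresolution of $A/I$ by $\add T$.} Take a minimal left $\add T$-approximation $f\colon A\to T^0$. For a $\tau$-tilting module $T$, \cite{AIR} gives an exact sequence
\[
A\xrightarrow{f}T^0\to T^1\to 0
\]
with $T^0,T^1\in\add T$. The kernel of $f$ is $I=\Ann_A(T)$. Indeed, $a\in\ker f$ if and only if every map $A\to T$ kills $a$, because $f$ is an approximation; and this holds if and only if $Ta=0$. Hence there is a short exact sequence
\[
0\to A/I\to T^0\to T^1\to0 .
\]
Alternatively, one can use that $T$ is $1$-tilting over $A/I$ \cite[Lemma~2.4 and Proposition~2.5]{CLZZ}. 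Apply $\Hom_A(-,T)$ to this sequence. Since $\Ext_A^{n}(T^0,T)=0=\Ext_A^{n+1}(T^1,T)$ for $n\geq1$ by self-orthogonality, the long exact sequence shows that $\Ext_A^n(A/I,T)$ is squeezed between two zero groups. This gives
\[
\Ext_A^n(A/I,T)=0 \quad\text{for all } n\geq1.
\]
I expect this step to be the main point. The only care needed is to check that the kernel of the approximation is exactly $I$, and that $T^0,T^1$ lie in $\add T$, so that self-orthogonality applies as Ext over $A$ and not merely over $A/I$.

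\emph{Step 2: shifting to $I$.} Apply $\Hom_A(-,T)$ to $0\to I\to A\to A/I\to0$. This gives the exact sequence
\[
0\to\Hom_A(A/I,T)\to\Hom_A(A,T)\to\Hom_A(I,T)\to\Ext_A^1(A/I,T)\to0 .
\]
It also gives $\Ext_A^n(I,T)\cong\Ext_A^{n+1}(A/I,T)$ for $n\geq1$, since $A$ is projective. The first map is an isomorphism because $TI=0$: every homomorphism $A\to T$, $1\mapsto t$, vanishes on $I$. Hence $\Hom_A(I,T)\cong\Ext_A^1(A/I,T)=0$, and $\Ext_A^n(I,T)\cong\Ext_A^{n+1}(A/I,T)=0$ for $n\geq1$, both by Step 1.

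\emph{Step 3: the consequences.} If $I\in\Sub T$ and $I\neq0$, then $I$ embeds in some $T^m$. Some coordinate projection then gives a nonzero map $I\to T$, contradicting $\Hom_A(I,T)=0$. So $I\in\Sub T$ forces $I=0$, and conversely $0\in\Sub T$. Finally, $I=0$ means $T$ is faithful. A faithful $\tau$-tilting module is $1$-tilting, as recalled in the introduction. Conversely, a $1$-tilting module $T$ has $A\in\Sub T$, so it is faithful and $I=0$.
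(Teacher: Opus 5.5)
Your proposal is correct and follows essentially the same route as the paper. The paper also uses the canonical approximation $A\to T_0\to T_1\to 0$ with kernel $I$ to get $\Ext_A^{n}(A/I,T)=0$ for $n\geq1$, then shifts along $0\to I\to A\to A/I\to 0$; degree zero vanishes because every map $A\to T$ kills $I$, and the equivalences follow from $\Hom_A(I,T)=0$ and the faithful $\tau$-tilting criterion.
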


Thus the condition $\Ann_A(T)\in\Sub T$ should be viewed as a target,
not as an independent weakening of faithfulness. The degree-zero
vanishing has a stronger categorical consequence. Let
\[
       \Tors_A(I):=\Filt(\Fac I)
\]
be the smallest torsion class containing $I$. We prove the separation
formula
\begin{equation}\label{eq:intro-separation}
       \Hom_A\bigl(\Tors_A(I),\Sub T\bigr)=0,
       \qquad
       \Tors_A(I)\cap\Sub T=\{0\}.
\end{equation}
In particular, if $I\neq0$, then no simple quotient of $I$ can occur
in $\soc T$.

This observation leads to a criterion which does not require knowing
$I$ in advance. Let $\Sim(A)$ be the set of isomorphism classes of
simple right $A$-modules and put
\[
 \mathfrak T(A)=
 \bigl\{\supp\topm(J_A)\mid
 0\neq J\triangleleft A\text{ is a nilpotent two-sided ideal}\bigr\}.
\]
A subset $\Sigma\subseteq\Sim(A)$ is called an \emph{ideal-top
hitting set} if $\Sigma\cap U\neq\varnothing$ for every
$U\in\mathfrak T(A)$. Although $A$ may have infinitely many ideals,
$\mathfrak T(A)$ has at most $2^{|\Sim(A)|}-1$ members.

\begin{theoremb}
Let $T$ be a self-orthogonal $\tau$-tilting $A$-module. Assume that
\[
       \Tors_A(J)\cap\Sub T\neq\{0\}
\]
for every nonzero nilpotent two-sided ideal $J$ of $A$. Then $T$ is
a $1$-tilting module. In particular, it is enough that
$\supp\soc T$ be an ideal-top hitting set.
\end{theoremb}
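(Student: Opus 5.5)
We prove Theorem B by contraposition, using Theorem A together with the separation formula \eqref{eq:intro-separation}. Suppose $T$ is a self-orthogonal $\tau$-tilting module that is not $1$-tilting. By Theorem A, $I=\Ann_A(T)\neq 0$. The first step is to check that $I$ is a nilpotent two-sided ideal, so that it is one of the ideals $J$ covered by the hypothesis. It is two-sided because it is an annihilator.

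For nilpotency, we use that $T$ is sincere and is a $1$-tilting module over $\bar A=A/I$. Since $T$ is sincere, every simple $A$-module is an $\bar A$-module, so $I\subseteq\rad A$; in an Artin algebra, $\rad A$ is nilpotent, hence so is $I$. If $I\subseteq \rad A$ needs more justification, we argue as follows. If $I\not\subseteq\rad A$, the image of $I$ in the semisimple algebra $A/\rad A$ is a nonzero ideal, so it contains a nonzero central idempotent block. Then some simple module $S$ satisfies $SI=S$. But $S$ occurs as a composition factor of $T$, and $T$ is an $\bar A$-module, so $S$ is a composition factor of an $\bar A$-module and hence $SI=0$. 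This contradiction gives $I\subseteq\rad A$.

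Applying the hypothesis with $J=I$ gives $\Tors_A(I)\cap\Sub T\neq\{0\}$. This contradicts \eqref{eq:intro-separation}. The separation itself follows from $\Hom_A(I,T)=0$ in Theorem A. Indeed, $\Hom_A(-,T)$ vanishes on $\Fac I$ because it is left exact in the first variable. It vanishes on $\Filt(\Fac I)$ by induction on filtration length. It therefore vanishes against every submodule of $T^n$, since a nonzero map into $T^n$ has a nonzero component into $T$. Finally, any $X\in\Tors_A(I)\cap\Sub T$ has $\mathrm{id}_X=0$. Hence $I=0$, and so $T$ is $1$-tilting.

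For the ``in particular'' part, assume $\supp\soc T$ hits every $U\in\mathfrak T(A)$, and let $J\neq0$ be a nilpotent two-sided ideal. Then $J_A\neq0$, so $\topm(J_A)\neq0$, and there is a simple module $S\in\supp\topm(J_A)\cap\supp\soc T$. The surjection $J\to S$ puts $S$ in $\Fac J\subseteq\Tors_A(J)$. The inclusion $S\hookrightarrow\soc T\subseteq T$ puts $S$ in $\Sub T$. Hence $0\neq S\in\Tors_A(J)\cap\Sub T$, so the main hypothesis holds and the first part applies.

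The only step that is not formal is the verification that $I$ is nilpotent, that is, $I\subseteq\rad A$. I expect this to be handled by the sincerity argument above.
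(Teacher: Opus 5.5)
Your proposal is correct and follows the paper's argument. You apply the hypothesis to $J=I=\Ann_A(T)$ to contradict the torsion separation $\Tors_A(I)\cap\Sub T=\{0\}$ (which comes from $\Hom_A(I,T)=0$), and you get the ideal-top case by exhibiting a simple module in $\Fac J\cap\Sub T$; the only difference is that you prove nilpotency of $I$ directly (sincerity forces $I$ to annihilate every simple module, so $I\subseteq\rad A$), whereas the paper quotes this as a standard fact.
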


As a consequence of Theorem A and Theorem B, we have the following theorem which gives a partial answer to the self-orthogonal $\tau$-tilting conjecture.

\begin{theoremc}
Let $A$ be a radical square zero Artin algebra. 
Then every self-orthogonal $\tau$-tilting $A$-module is a classical
$1$-tilting module.
\end{theoremc}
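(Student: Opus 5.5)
The plan is to show $I=\Ann_A(T)=0$ and then invoke Theorem A, i.e.\ $I=0\Leftrightarrow T$ is $1$-tilting. Equivalently, I would verify the hypothesis of Theorem B for the ideals that can occur. Throughout, $J=\rad A$ with $J^2=0$.

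\textbf{Step 1: reduce $I$ to a nonzero semisimple module.} First, $I\subseteq J$. Suppose not. Then the image of $I$ in the semisimple algebra $A/J$ is a nonzero ideal, hence contains a nonzero idempotent. Lifting idempotents modulo the nilpotent ideal $I\cap J$, the ideal $I$ contains a primitive idempotent conjugate to some $e_i$, and so contains $e_i$ itself. This forces $Te_i=0$, contradicting sincerity of the $\tau$-tilting module $T$. Since $I\subseteq J$ and $J^2=0$, we get $IJ=0$, so $I_A$ is semisimple. In particular every nonzero two-sided ideal inside $J$ is nilpotent with $\topm I=I$.

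\textbf{Step 2: extract a simple with no maps or extensions into $T$.} Assume $I\neq0$ and pick a simple summand $S_j$ of $I_A$. Concretely, take $0\neq x\in e_iIe_j$ with $xA\cong S_j$. By Theorem A, $\Hom_A(S_j,T)=0$ and $\Ext^n_A(S_j,T)=0$ for all $n\geq1$. Radical square zero makes syzygies transparent: $\Omega S_j=e_jJ$ is semisimple. The long exact sequence for $0\to e_jJ\to e_jA\to S_j\to0$ then gives
\[
Te_j\cong\Hom_A(e_jJ,T)\quad\text{and}\quad\Ext^m_A(e_jJ,T)=0\ (m\geq1).
\]
By sincerity $Te_j\neq0$, so $e_jJ\neq0$. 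Hence some arrow $j\to l$ exists with $\Hom_A(S_l,T)\neq0$ and $\Ext^{\geq1}_A(S_l,T)=0$.

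\textbf{Step 3: derive a contradiction.} I would use that $T$ is tilting over $\bar A=A/I$ \cite{CLZZ}. This gives an exact sequence $0\to\bar A\to T_0\to T_1\to0$ with $T_0,T_1\in\add T$, so $\soc\bar A_{\bar A}\subseteq\soc T_0$. It therefore suffices to show that $S_j$ lies in the socle of $\bar A$, since then $\Hom_A(S_j,T)\neq0$, contradicting Step 2.

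For this I will try to choose $j$ well. Use the freedom in the choice of $x$: take $x\in e_iIe_j$, and note that $\bar x$ vanishes in $\bar A$. So I instead aim at $e_j\bar A=e_jA/e_jI$. If $e_jJ\subseteq I$, then $e_j\bar A\cong S_j$ is simple and projective over $\bar A$, so it is a summand of $\bar A$ and lies in its socle, and we are done. Otherwise, I use the isomorphism $Te_j\cong\Hom_A(e_jJ,T)$ from Step 2, restricted to $\bar A$-modules. This shows the arrows out of $j$ that survive in $\bar A$ account for all of $Te_j$. Combined with a count of dimensions through $0\to\bar A\to T_0\to T_1\to 0$ at vertex $j$, I expect to show $S_j\subseteq\soc\bar A$ anyway.

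\textbf{Main obstacle.} I expect Step 3 to be the hard part: the quiver may have cycles, so a naive "walk along arrows to a sink" iteration from Step 2 need not terminate.

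If the direct argument stalls, the fallback is to prove the socle condition of Theorem B directly. By Step 1, the relevant ideals are exactly the nonzero two-sided ideals inside $J$, and their tops are the subsets $\{S_j : e_AJe_j\neq0\}$ of targets of arrows, or unions of such subsets. So I would show that $\supp\soc T$ meets every such set. The approach is to show that every vertex $j$ which is a target of an arrow has $\Hom_A(S_j,T)\neq0$ or $\Ext^{\geq1}_A(S_j,T)\neq0$. This uses self-orthogonality of $T$ applied to the projective presentations $e_kJ\to e_kA$ of the summands of $T$, which are again semisimple-kernel presentations.
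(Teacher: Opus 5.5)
\textbf{There is a genuine gap in Step 3.} Your Steps 1 and 2 are correct; Step 1 could simply quote nilpotency of $I$. The sub-case $e_jJ\subseteq I$ of Step 3 also gives a contradiction. The general case, however, stops at ``I expect to show'', and the reduction you chose cannot carry it.

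Whether $S_j\subseteq\soc\overline{A}$ holds depends only on the ideal $I$. Suppose $Je_j\subseteq I$ while $e_jJ\not\subseteq I$. Then $S_j$ is the top of the non-simple projective $e_j\overline{A}$. It has multiplicity zero in $\soc(e_k\overline{A})$ for every $k$, because $e_kJe_j\subseteq e_kIe_j$. Hence $S_j\not\subseteq\soc\overline{A}$. This happens for the only available $j$ when, for instance, $I=Je_j$ for a vertex $j$ that receives an arrow and emits an arrow to a different vertex.

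A dimension count at vertex $j$ using $Te_j\cong\Hom_A(e_jJ,T)$ cannot change a fact about $\overline{A}$. You would have to show that such an ideal is never the annihilator of a self-orthogonal $\tau$-tilting module, which is the theorem itself. The fallback has the same defect. Given Theorem A, showing that no arrow-target $S_j$ lies in $\perpall{T}$ is equivalent to $I=0$, and the sentence about presentations of summands of $T$ is not an argument. Your worry about cycles is the real issue: Step 2 only propagates $\Ext^{\geq1}$-vanishing along paths leaving $j$, and nothing local forces this to terminate.

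\textbf{The missing idea.} The paper uses the Loewy structure of $T$ rather than that of $\overline{A}$, and then a global homological invariant.
\begin{enumerate}[\rm(1)]
\item Since $\rad^2A=0$, one has $\rad T\subseteq\soc T$, so $\rad T$ is semisimple.
\item $S_j$ is a composition factor of the sincere module $T$, and $\Hom_A(S_j,T)=0$. So $S_j$ cannot occur in $\rad T$, hence it occurs in $\topm T$ and $S_j\in\Fac T$. As $I$ is semisimple, $I\in\Fac T$.
\item Thus $I$ is a nonzero object of $\Fac T\cap\perpall{T}$. Iterate minimal right $\add T$-approximations inside $\Fac T$, which equals $T^{\perp_1}$ in $\mathsf{mod}\,\overline{A}$. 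Applying $\Hom_A(-,T)$ produces modules over $B=\End_A(T)^{\rm op}$ of every finite projective dimension, so $\findim B=\infty$.
\item On the other hand, $B$ is derived equivalent to $\overline{A}^{\rm op}$, which has radical square zero. By Green--Huisgen-Zimmermann and Pan--Xi's derived invariance, $\findim B<\infty$, a contradiction.
\end{enumerate}
This global finitistic-dimension argument replaces the vertex-by-vertex count you were looking for.
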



We also give applications to semisimple-source triangular
matrix algebras whose terminal diagonal blocks are arbitrary local
algebras and whose connecting bimodule is unrestricted. 

\begin{theoremd}
Assume that $A$ is Morita equivalent to
\[
 A'=\begin{pmatrix}
       S&M\\
       0&C_1\times\cdots\times C_s
    \end{pmatrix},
\]
where $S$ is a semisimple Artin algebra, each $C_j$ is a local Artin
algebra, and ${}_SM_{C_1\times\cdots\times C_s}$ has finite length on
both sides. Then every
self-orthogonal $\tau$-tilting $A$-module is $1$-tilting.
\end{theoremd}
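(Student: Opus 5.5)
The plan is to reduce Theorem D to the ideal-top hitting criterion of Theorem B. The first step is the reduction to $A'$. Self-orthogonality, the $\tau$-tilting property, being $1$-tilting, annihilators, socles and two-sided ideals are all transported by a Morita equivalence, so we may assume $A=A'$. Write $C=C_1\times\cdots\times C_s$ and $e_S,e_C$ for the two diagonal idempotents, with $e_C=e_1+\cdots+e_s$ and $e_j$ the unit of $C_j$. A right $A'$-module $X$ is a triple $(Xe_S,Xe_C,\varphi)$, where $\varphi\colon Xe_S\otimes_SM\to Xe_C$. The simple right $A'$-modules are the simple $S$-modules (concentrated at $e_S$) together with $E_1,\dots,E_s$, where $E_j=\topm(C_j)$ viewed at $e_j$. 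Here we use that each $C_j$ is local.

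The second step shows that $\{E_1,\dots,E_s\}\subseteq\supp\soc X$ for every module $X$ with $Xe_j\neq0$ for all $j$, and in particular for $T$, which is sincere. Note that
\[
\rad A'=\begin{pmatrix}0&M\\0&\rad C\end{pmatrix}.
\]
Take $x\in Xe_j$. Then $x\cdot M = xe_C\cdot e_SMe_C=0$, because $e_Ce_S=0$. So $x$ is killed by $\rad A'$ exactly when $x\cdot\rad C_j=0$. Hence $\soc(X)e_j=\soc_{C_j}(Xe_j)$, and this is nonzero whenever $Xe_j\neq0$. Since $C_j$ is local, the only simple $C_j$-module is $E_j$, so $E_j$ occurs in $\soc X$. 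Applying this to $T$, sincerity gives $Te_j\neq0$ for every $j$, so $E_j\in\supp\soc T$ for all $j$.

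The third step, which I expect to be the only point needing care, computes $\mathfrak T(A')$. Let $J\neq0$ be a nilpotent two-sided ideal. Then $J\subseteq\rad A'$, and therefore $Je_S\subseteq(\rad A')e_S=0$. So the right module $J_{A'}$ is concentrated at the $C$-vertices, and every composition factor of $J_{A'}$, in particular every simple in its top, lies in $\{E_1,\dots,E_s\}$. Since $J\neq0$, its top is nonzero, so each $U\in\mathfrak T(A')$ is a nonempty subset of $\{E_1,\dots,E_s\}$. Combined with the second step, this shows that $\supp\soc T$ is an ideal-top hitting set. Theorem B then gives that $T$ is $1$-tilting.

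If one prefers to bypass Theorem B, there is a direct route through Theorem A. Suppose $I=\Ann_A(T)\neq0$. Then $I$ is nilpotent, since otherwise $I$ would contain a nonzero idempotent $e$ with $Te=0$, contradicting sincerity. By the third step some $E_j$ lies in $\topm I$, and by the second step $E_j$ embeds in $T$. This yields a nonzero map $I\to E_j\hookrightarrow T$, contradicting $\Hom_A(I,T)=0$.
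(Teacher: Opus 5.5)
Your proof is correct and follows essentially the paper's argument. Both show that $\supp\soc T$ is an ideal-top hitting set and then invoke Theorem B: every simple at a $C_j$-vertex lies in the socle of the sincere module $T$, while the tops of nonzero nilpotent ideals live only at those vertices because $(\rad A')e_S=0$. The only difference is cosmetic: the paper gets the socle statement from Lemma~\ref{lem:projective-support-transversal} and Corollary~\ref{cor:homogeneous-radical}, using that every composition factor of $e_jA'$ is isomorphic to $E_j$, whereas you compute $\soc(X)e_j=\soc_{C_j}(Xe_j)$ directly.
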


The algebras in Theorem D include arbitrary source one-point extensions of local algebras. We also give an explicit
$51$-dimensional example which is connected, representation-infinite,
CM-infinite, of infinite global dimension, non-self-injective, and
non-gentle. The familiar commutative and one-simple-block
consequences remain as specializations rather than the main examples.
Existing work on triangular matrix algebras concerns $\tau$-tilting finiteness and the construction of support
$\tau$-tilting modules under one-point extension
\cite{AiharaHonma,GaoXie,Suarez}. Our application instead concerns
automatic faithfulness under self-orthogonality.

The paper is organized as follows. In Section 2, we combine the
canonical approximation of a $\tau$-tilting module with
self-orthogonality and establish complete annihilator orthogonality
and torsion separation. In Section 3, we develop the transversal,
ideal-top, cogeneration, and dimension-shift criteria. In Section 4, we get a
finitistic-dimension obstruction and give applications to radical square zero algebras. In Section 5,
we prove the projective-support criterion and apply it to triangular
matrix algebras and one-point extensions. We also give the explicit
$51$-dimensional family.

Throughout this paper, $A$ is an Artin $R$-algebra, where $R$ is a
commutative Artin ring, and all modules are finitely generated right
modules. We write $\modA$ for the category of such modules and
$D=\Hom_R(-,E(R/J(R)))$ for the standard duality, where $J(R)$ is the
Jacobson radical of $R$. The Auslander--Reiten translation is denoted
by $\tau$.

\section{Annihilator orthogonality and torsion separation}

In this section, we give the new theorem on complete annihilator orthogonality and its applications.

For $M\in\modA$, let $\add M$ denote the direct summands of finite direct
sums of copies of $M$. The classes $\Fac M$ and $\Sub M$ consist,
respectively, of factor modules and submodules of objects in $\add M$.
If $\mathcal C$ is a class of modules, then $\Filt(\mathcal C)$ is
the class of modules admitting a finite filtration with factors in
$\mathcal C$.  We use
\[
           \Ann_A(M)=\{a\in A\mid Ma=0\}.
\]
For a finite-length module $X$, its support $\supp X$ is the set of
isomorphism classes of its simple composition factors.

We use the standard definitions of $\tau$-rigid and $\tau$-tilting
modules from \cite{AIR}.  In particular, if $T$ is $\tau$-tilting,
there is an exact sequence
\begin{equation}\label{eq:canonical-approximation}
 A\xrightarrow{f}T_0\longrightarrow T_1\longrightarrow0,
 \qquad T_0,T_1\in\add T,
\end{equation}
in which $f$ is a left $\add T$-approximation
\cite[Proposition~2.23]{AIR}.

\begin{lemma}\label{lem:kernel}
For the approximation in \eqref{eq:canonical-approximation}, one has
\[
                         \ker f=\Ann_A(T).
\]
\end{lemma}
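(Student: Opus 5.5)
Both inclusions follow from the universal property of a left $\add T$-approximation, using the identification $\Hom_A(A,X)\cong X$ via $g\mapsto g(1)$ for any module $X$. I will prove $\ker f\supseteq\Ann_A(T)$ and $\ker f\subseteq\Ann_A(T)$ separately. Note that the exactness of \eqref{eq:canonical-approximation} at $T_1$ plays no role; only the approximation property of $f$ is used.

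For $\ker f\supseteq\Ann_A(T)$: put $x=f(1)\in T_0$. Then $f(a)=f(1\cdot a)=xa$ for every $a\in A$. If $a\in\Ann_A(T)$, then $T_0a=0$, because $T_0\in\add T$ is a direct summand of some $T^{m}$ and $T^m a=0$. Hence $f(a)=xa=0$.

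For $\ker f\subseteq\Ann_A(T)$: let $a\in\ker f$ and let $t\in T$ be arbitrary. Consider the homomorphism $g\colon A\to T$ with $g(b)=tb$. Since $T\in\add T$ and $f$ is a left $\add T$-approximation, there is $h\colon T_0\to T$ with $g=hf$. Then
\[
ta=g(a)=h(f(a))=h(0)=0.
\]
Since $t$ was arbitrary, $Ta=0$, so $a\in\Ann_A(T)$.

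There is no real obstacle here. The only point needing care is that the factorization must be applied to the specific maps $A\to T$ given by right multiplication by elements of $T$. This is exactly why the left $\add T$-approximation property gives the equality $\ker f=\bigcap_{g\colon A\to T}\ker g=\Ann_A(T)$.
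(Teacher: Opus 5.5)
Your proof is correct and follows the paper's argument essentially verbatim. The inclusion $\Ann_A(T)\subseteq\ker f$ comes from $f(a)=f(1)a$ together with $T_0\in\add T$. For the reverse inclusion, you factor each map $A\to T$, $b\mapsto tb$, through the left $\add T$-approximation $f$.
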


\begin{proof}
Set $I=\Ann_A(T)$.  Since $T_0\in\add T$ and
$f(a)=f(1)a$, one has $f(I)=0$. Conversely, for $t\in T$, the map
$h_t:A\to T$ defined by $h_t(a)=ta$ factors through $f$.  If
$a\in\ker f$, then $ta=h_t(a)=0$ for every $t\in T$, so $a\in I$.
\end{proof}

We use two further standard facts. If $T$ is $\tau$-tilting and
$I=\Ann_A(T)$, then $I$ is nilpotent and $T$ is a faithful
$\tau$-tilting, hence $1$-tilting, module over $A/I$. In particular,
$T$ is $1$-tilting over $A$ precisely when $I=0$; see
\cite[Lemma~2.4 and Proposition~2.5]{CLZZ}.

The canonical approximation controls all contravariant extensions
from the annihilator to $T$.

\begin{theorem}[Complete annihilator orthogonality]
\label{thm:orthogonality}
Let $T$ be a self-orthogonal $\tau$-tilting $A$-module and set
$I=\Ann_A(T)$.  Then
\[
              \Hom_A(I,T)=0
              \quad\text{and}\quad
              \Ext_A^n(I,T)=0\quad(n\geq1).
\]
Equivalently, $I\in\perpall{T}$.
\end{theorem}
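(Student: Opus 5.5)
The plan is to extract two short exact sequences from the canonical approximation \eqref{eq:canonical-approximation} and to transfer self-orthogonality of $T$ to $I$ by dimension shifting. By Lemma~\ref{lem:kernel}, $\ker f=I$. Put $K=\operatorname{im} f\cong A/I$. This gives
\[
 0\to I\to A\to K\to 0,
 \qquad
 0\to K\to T_0\to T_1\to 0 .
\]
We apply $\Hom_A(-,T)$ to both sequences and compare the resulting long exact sequences.

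\emph{Step 1: vanishing for $K$.} Apply $\Hom_A(-,T)$ to the second sequence. For $n\geq1$ the long exact sequence contains
\[
 \Ext_A^n(T_0,T)\to\Ext_A^n(K,T)\to\Ext_A^{n+1}(T_1,T).
\]
Both outer terms vanish because $T_0,T_1\in\add T$ and $T$ is self-orthogonal. Hence $\Ext_A^n(K,T)=0$ for all $n\geq1$.

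\emph{Step 2: shift to $I$.} Apply $\Hom_A(-,T)$ to the first sequence. Since $A$ is projective, $\Ext_A^n(A,T)=0$ for $n\geq1$. This gives isomorphisms $\Ext_A^n(I,T)\cong\Ext_A^{n+1}(K,T)=0$ for $n\geq1$, and an exact sequence
\[
 \Hom_A(K,T)\xrightarrow{\pi^*}\Hom_A(A,T)\to\Hom_A(I,T)\to\Ext_A^1(K,T)=0 .
\]
So $\Hom_A(I,T)\cong\operatorname{coker}\pi^*$, where $\pi:A\to K$ is the projection.

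\emph{Step 3: surjectivity of $\pi^*$.} This is the only point where the approximation property of $f$ enters. Write $f=\iota\pi$ with $\iota:K\to T_0$ the inclusion. Since $f$ is a left $\add T$-approximation, every $h:A\to T$ has the form $h=gf$ for some $g:T_0\to T$. Then $h=(g\iota)\pi=\pi^*(g\iota)$, so $\pi^*$ is surjective and $\Hom_A(I,T)=0$.

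Combining Steps 2 and 3 gives $I\in\perpall{T}$. I expect no serious obstacle: the proof is routine homological algebra once Lemma~\ref{lem:kernel} identifies $\ker f$ with $I$. The one place where care is needed is Step 3. Degree zero cannot be obtained by dimension shifting alone, and it uses that $f$ is an approximation rather than merely a map into $\add T$.
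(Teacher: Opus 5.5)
Your proposal is correct and follows essentially the same route as the paper. The paper also splits the canonical approximation into $0\to I\to A\to \operatorname{Im}f\to0$ and $0\to\operatorname{Im}f\to T_0\to T_1\to0$ and gets the higher vanishing by dimension shifting. In degree zero it likewise combines $\Ext_A^1(\operatorname{Im}f,T)=0$ with the fact that every map $A\to T$ factors through $f$. Your cokernel-of-$\pi^*$ formulation is just a rephrasing of the paper's ``surjective and zero restriction map'' argument.
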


\begin{proof}
Choose \eqref{eq:canonical-approximation} and put
$C=\operatorname{Im}f$.  It gives short exact sequences
\[
 0\longrightarrow C\longrightarrow T_0\longrightarrow T_1
 \longrightarrow0
\]
and
\[
 0\longrightarrow I\longrightarrow A\longrightarrow C
 \longrightarrow0.
\]
Because $T_0,T_1\in\add T$ and $T$ is self-orthogonal, the first
sequence yields
\[
                    \Ext_A^n(C,T)=0\qquad(n\geq1).
\]
The second sequence then gives $\Ext_A^n(I,T)=0$ for every $n\geq1$.

Applying $\Hom_A(-,T)$ to the second sequence also shows that the
restriction map
\[
                 \Hom_A(A,T)\longrightarrow\Hom_A(I,T)
\]
is surjective. Every map $A\to T$ factors through the left
$\add T$-approximation $f$ and therefore vanishes on $I=\ker f$.
The restriction map is thus zero, and $\Hom_A(I,T)=0$.
\end{proof}

\begin{corollary}\label{cor:sub-equivalence}
Under the hypotheses of Theorem \ref{thm:orthogonality}, the following
conditions are equivalent:
\begin{enumerate}[\rm(i)]
\item $I\in\Sub T$;
\item $I=0$;
\item $T$ is a $1$-tilting $A$-module.
\end{enumerate}
\end{corollary}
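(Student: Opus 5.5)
The plan is to prove the cycle (ii)$\Rightarrow$(i)$\Rightarrow$(ii) and to get (ii)$\Leftrightarrow$(iii) from the standard facts recalled before Theorem \ref{thm:orthogonality}.

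\emph{(ii)$\Leftrightarrow$(iii).} By \cite[Lemma~2.4 and Proposition~2.5]{CLZZ}, as quoted above, a $\tau$-tilting module $T$ is $1$-tilting over $A$ precisely when $\Ann_A(T)=0$. This equivalence needs no self-orthogonality.

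\emph{(ii)$\Rightarrow$(i).} This is trivial, because $0\in\Sub T$.

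\emph{(i)$\Rightarrow$(ii).} This is the only step that uses Theorem \ref{thm:orthogonality}.
\begin{enumerate}[\rm(a)]
\item Suppose $I\in\Sub T$. Then there is a monomorphism $\iota\colon I\hookrightarrow X$ with $X\in\add T$.
\item Write $X$ as a direct summand of $T^m$, so that $X\oplus Y\cong T^m$. Composing $\iota$ with the inclusion $X\to T^m$ gives a monomorphism $I\hookrightarrow T^m$.
\item Its components are maps $I\to T$. By Theorem \ref{thm:orthogonality}, $\Hom_A(I,T)=0$, so every component vanishes.
\item Hence the monomorphism $I\hookrightarrow T^m$ is zero, which forces $I=0$.
\end{enumerate}

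There is no real obstacle here; all the work sits in Theorem \ref{thm:orthogonality}. The one point to handle carefully is the passage in (b) from an arbitrary object of $\add T$ to a finite direct sum $T^m$, which lets the vanishing of $\Hom_A(I,T)$ apply componentwise.
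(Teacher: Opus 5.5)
Your proposal is correct and follows the paper's proof. The paper also gets (i)$\Rightarrow$(ii) from $\Hom_A(I,T)=0$ killing any monomorphism $I\hookrightarrow T^m$, calls (ii)$\Rightarrow$(i) immediate, and takes (ii)$\Leftrightarrow$(iii) from the faithful $\tau$-tilting criterion recalled before Theorem~\ref{thm:orthogonality}; your passage from an object of $\add T$ to $T^m$ just spells out a step the paper leaves implicit.
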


\begin{proof}
If $I\hookrightarrow T^m$, then Theorem \ref{thm:orthogonality}
forces this monomorphism to be zero, whence $I=0$. The converse is
immediate. The equivalence of (ii) and (iii) is the faithful
$\tau$-tilting criterion.
\end{proof}

\begin{remark}\label{rem:selforth-essential-preview}
The vanishing in degree zero uses self-orthogonality through the
surjectivity of the restriction map. It fails for general
$\tau$-tilting modules. In Example \ref{ex:selforth-essential}, a
$\tau$-tilting module $T$ satisfies
$0\neq\Ann_A(T)\in\add T$ but is not self-orthogonal.
\end{remark}

For $M\in\modA$, the smallest torsion class containing $M$ is
\[
                       \Tors_A(M)=\Filt(\Fac M).
\]
Complete orthogonality separates the torsion class generated by the
annihilator from the submodule category cogenerated by $T$.

\begin{theorem}[Torsion separation]\label{thm:torsion-separation}
Let $T$ be a self-orthogonal $\tau$-tilting $A$-module and put
$I=\Ann_A(T)$.  Then
\[
       \Hom_A\bigl(\Tors_A(I),\Sub T\bigr)=0.
\]
In particular,
\[
                     \Tors_A(I)\cap\Sub T=\{0\}.
\]
\end{theorem}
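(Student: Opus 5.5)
The plan is to reduce everything to the degree-zero vanishing $\Hom_A(I,T)=0$ of Theorem \ref{thm:orthogonality}, and then propagate it in two directions: along the left variable through quotients and extensions, and along the right variable through submodules of objects in $\add T$.

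The first step replaces $\Sub T$ by $T$ itself. For $Y\in\Sub T$ there is a monomorphism $Y\hookrightarrow T^m$, and $\Hom_A(X,-)$ is left exact. Hence it suffices to prove $\Hom_A(X,T)=0$ for every $X\in\Tors_A(I)$; a map $X\to Y$ composed with the inclusion into $T^m$ would then be zero, so it is zero. This uses $\Hom_A(X,T^m)=\Hom_A(X,T)^m$.

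The second step handles $\Fac I$. If $X\in\Fac I$, choose an epimorphism $I^k\twoheadrightarrow X$ ($\add I$ objects are summands of such, and a quotient of a summand of $I^k$ is a quotient of $I^k$). Precomposition gives an injection
\[
\Hom_A(X,T)\hookrightarrow\Hom_A(I^k,T)=\Hom_A(I,T)^k,
\]
and the right-hand side vanishes by Theorem \ref{thm:orthogonality}. So $\Hom_A(X,T)=0$.

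The third step passes to $\Filt(\Fac I)$ by induction on the length of a filtration. Given a short exact sequence $0\to X'\to X\to X''\to0$ with $\Hom_A(X',T)=0=\Hom_A(X'',T)$, the left exact sequence
\[
0\to\Hom_A(X'',T)\to\Hom_A(X,T)\to\Hom_A(X',T)
\]
forces $\Hom_A(X,T)=0$. Since $\Tors_A(I)=\Filt(\Fac I)$, this covers the whole torsion class.

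For the intersection statement, take $X\in\Tors_A(I)\cap\Sub T$. Its identity map lies in $\Hom_A(X,X)=0$, so $X=0$.

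I expect no real obstacle here: all the homological input is already in Theorem \ref{thm:orthogonality}. The only care needed is to note that only degree-zero vanishing is used, and that it is stable under quotients and extensions in the first variable and under subobjects in the second, so no $\Ext$-vanishing beyond that theorem is required.
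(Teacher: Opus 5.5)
Your proof is correct and follows the paper's argument. The paper also kills maps from $\Fac I$ to $\Sub T$ by composing with an epimorphism $I^r\twoheadrightarrow X$ and a monomorphism $Y\hookrightarrow T^s$ and invoking $\Hom_A(I,T)=0$; it then extends to $\Filt(\Fac I)$ because $\{X\mid\Hom_A(X,Y)=0\}$ is extension closed. The only cosmetic difference is that you reduce to $Y=T$ before the filtration induction, whereas the paper runs the induction for each fixed $Y\in\Sub T$.
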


\begin{proof}
Let $X\in\Fac I$ and $Y\in\Sub T$. Choose an epimorphism
$I^r\twoheadrightarrow X$ and a monomorphism $Y\hookrightarrow T^s$.
For any map $g:X\to Y$, the composite
\[
              I^r\longrightarrow X\xrightarrow{g}Y
              \longrightarrow T^s
\]
is zero by Theorem \ref{thm:orthogonality}. The first map is
epimorphic and the last is monomorphic, so $g=0$. For fixed $Y$, the
class $\{X\mid\Hom_A(X,Y)=0\}$ is extension closed. Induction on a
$\Fac I$-filtration therefore gives the asserted vanishing on
$\Tors_A(I)$. If a module belongs to both classes, its identity map
is zero.
\end{proof}

The first visible layer of this separation is especially useful.

\begin{corollary}\label{cor:top-socle}
With $T$ and $I$ as above,
\[
              \supp\topm(I_A)\cap\supp\soc(T_A)=\varnothing.
\]
\end{corollary}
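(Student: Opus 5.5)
The plan is to deduce the corollary directly from Theorem \ref{thm:torsion-separation}, applied to simple modules that lie in both classes. Suppose a simple module $S$ belongs to $\supp\topm(I_A)\cap\supp\soc(T_A)$. Then $S$ is a composition factor of $\topm(I)=I/\rad I$. Since $\topm(I)$ is semisimple, $S$ is a direct summand of $I/\rad I$, and hence a quotient of $I$. Consequently $S\in\Fac I\subseteq\Tors_A(I)$.

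On the other side, $\soc(T)$ is semisimple, and $S$ is one of its composition factors. So $S$ is a direct summand of $\soc T$, and therefore $S$ is a submodule of $T$. This gives $S\in\Sub T$.

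Combining the two memberships, $S\in\Tors_A(I)\cap\Sub T$. Theorem \ref{thm:torsion-separation} says this intersection is $\{0\}$, so $S=0$, which is impossible for a simple module. Hence the intersection of supports is empty. Equivalently, one can argue with morphisms: the composite $I\twoheadrightarrow S\hookrightarrow T$ is a nonzero element of $\Hom_A(I,T)$, which contradicts Theorem \ref{thm:orthogonality} directly.

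There is no real obstacle here. The only point needing care is the identification of "support" with the set of isomorphism classes of simple summands. This holds for semisimple modules of finite length, and both $\topm(I)$ and $\soc(T)$ are semisimple by definition over an Artin algebra.
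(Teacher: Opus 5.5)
Your proposal is correct and follows the paper's proof: a simple $S$ in both supports is a quotient of $I$ and a submodule of $T$, so it lies in $\Tors_A(I)\cap\Sub T$, which Theorem \ref{thm:torsion-separation} shows is $\{0\}$. Your alternative, viewing $I\twoheadrightarrow S\hookrightarrow T$ as a nonzero element of $\Hom_A(I,T)$, is a slightly more direct appeal to Theorem \ref{thm:orthogonality} and is equally valid.
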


\begin{proof}
If a simple module $S$ belonged to the intersection, then $S$ would
be a quotient of $I$ and a submodule of $T$. Thus
$S\in\Tors_A(I)\cap\Sub T$, contrary to Theorem
\ref{thm:torsion-separation}.
\end{proof}

We end this section with the example showing that self-orthogonality can not be removed
from Corollary \ref{cor:sub-equivalence}.

\begin{example}
\label{ex:selforth-essential}
Let $k$ be a field and
\[
 A=kQ/J^2,
 \qquad
 Q:\quad 1\xrightarrow{\alpha}2\xrightarrow{\beta}1,
\]
where $J$ is the arrow ideal. Work with right modules, put
$P_i=e_iA$, and let $S_i=P_i/\rad P_i$.  Consider
\[
                         T=P_1\oplus S_1. 
\]
Then
\begin{enumerate}

\item  $T=P_1\oplus S_1$ is a $\tau$-tilting module.

\item A direct calculation gives $\Ann_A(T)=k\beta\simeq S_1$
Thus $0\neq\Ann_A(T)\in\add T$.  

\item One has $\Ext_A^2(S_1,S_1)\neq0$, which implies that $T$ is not self-orthogonal and $\Ann_A(T)\not\in \perpall{T}$.

\end{enumerate}
\end{example}

\section{Faithfulness criteria and nilpotent ideals}

In this section, we give several faithfulness criteria in terms of non-zero nilpotent two-sided ideals.

We begin with the following theorem on torsion-transversal detection.

\begin{theorem}[Torsion-transversal detection]
\label{thm:torsion-transversal}
Let $T$ be a self-orthogonal $\tau$-tilting $A$-module. Assume that
\begin{equation}\label{eq:torsion-transversal}
       \Tors_A(J)\cap\Sub T\neq\{0\}
\end{equation}
for every nonzero nilpotent two-sided ideal $J$ of $A$. Then $T$ is
a $1$-tilting $A$-module.
\end{theorem}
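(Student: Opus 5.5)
This is a proof by contradiction built from two facts already available: Theorem \ref{thm:torsion-separation} and the nilpotency of the annihilator. Set $I=\Ann_A(T)$ and suppose $T$ is not $1$-tilting. By the faithful $\tau$-tilting criterion recalled before Theorem \ref{thm:orthogonality} (from \cite[Lemma~2.4 and Proposition~2.5]{CLZZ}), this means $I\neq0$.

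The same cited facts tell us that $I$ is a nilpotent ideal. It is also two-sided, since it is the annihilator of a right module: if $Ta=0$, then $T(ab)=0$ and $T(ba)=(Tb)a\subseteq Ta=0$. So $J:=I$ is a nonzero nilpotent two-sided ideal of $A$, and the hypothesis \eqref{eq:torsion-transversal} applies to it, giving
\[
\Tors_A(I)\cap\Sub T\neq\{0\}.
\]
This contradicts Theorem \ref{thm:torsion-separation}, which says that $\Tors_A(I)\cap\Sub T=\{0\}$ for a self-orthogonal $\tau$-tilting module $T$. Hence $I=0$, and $T$ is $1$-tilting by Corollary \ref{cor:sub-equivalence} (equivalence of (ii) and (iii)).

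I do not expect a real obstacle. The only points to state explicitly are that the annihilator of a right module is a two-sided ideal and that it is nilpotent; the latter is exactly the cited \cite{CLZZ} fact. The hypothesis is only ever applied to the single ideal $J=I$.
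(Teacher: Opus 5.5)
Your proof is correct and is the paper's argument: if $I=\Ann_A(T)\neq0$, it is a nonzero nilpotent two-sided ideal, so the hypothesis applied to $J=I$ contradicts Theorem~\ref{thm:torsion-separation}; hence $I=0$, and Corollary~\ref{cor:sub-equivalence} makes $T$ $1$-tilting. Your explicit check that the annihilator is two-sided is a harmless detail the paper leaves implicit.
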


\begin{proof}
Put $I=\Ann_A(T)$. If $I\neq0$, then $I$ is a nonzero nilpotent
two-sided ideal. Condition \eqref{eq:torsion-transversal}, applied to
$J=I$, contradicts Theorem \ref{thm:torsion-separation}. Hence
$I=0$, and Corollary \ref{cor:sub-equivalence} applies.
\end{proof}

The canonical approximation gives a second algebra-level version of
the transversal condition. If $I=\Ann_A(T)$, then
\eqref{eq:canonical-approximation} induces a monomorphism
\begin{equation}\label{eq:regular-quotient-embedding}
                         A/I\lhook\joinrel\longrightarrow T_0.
\end{equation}
Thus the regular $A/I$-module, and every module cogenerated by it, is
already cogenerated by $T$.

\begin{theorem}
\label{thm:regular-quotient-transversal}
Assume that
\begin{equation}\label{eq:regular-quotient-transversal}
 \Tors_A(J)\cap\Sub(A/J)\neq\{0\}
\end{equation}
for every nonzero nilpotent two-sided ideal $J$ of $A$. Then every
self-orthogonal $\tau$-tilting $A$-module is $1$-tilting.
\end{theorem}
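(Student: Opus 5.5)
We argue by contradiction, reducing to Theorem \ref{thm:torsion-transversal}. Let $T$ be a self-orthogonal $\tau$-tilting $A$-module and put $I=\Ann_A(T)$. If $I=0$ we are done by Corollary \ref{cor:sub-equivalence}. So assume $I\neq0$. Since $I$ is a nonzero nilpotent two-sided ideal, hypothesis \eqref{eq:regular-quotient-transversal} applies with $J=I$ and gives a nonzero module
\[
X\in\Tors_A(I)\cap\Sub(A/I).
\]

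The key step is to show that $\Sub(A/I)\subseteq\Sub T$. This follows from the monomorphism \eqref{eq:regular-quotient-embedding}, $A/I\hookrightarrow T_0$ with $T_0\in\add T$. Lemma \ref{lem:kernel} identifies $\ker f$ with $I$, so $f$ induces an injection $A/I\to T_0$. Consequently $(A/I)^m\hookrightarrow T_0^m\in\add T$. Any submodule of a module in $\add(A/I)$ is a submodule of some $(A/I)^m$, so it embeds in $T_0^m$ and lies in $\Sub T$.

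It follows that $0\neq X\in\Tors_A(I)\cap\Sub T$. This contradicts the torsion separation of Theorem \ref{thm:torsion-separation}. Hence $I=0$, and $T$ is $1$-tilting by Corollary \ref{cor:sub-equivalence}.

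Equivalently, the hypothesis \eqref{eq:regular-quotient-transversal} implies the hypothesis \eqref{eq:torsion-transversal} of Theorem \ref{thm:torsion-transversal}, but only for the single ideal $J=\Ann_A(T)$. That is exactly the one case that theorem's proof uses.

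The only point needing care is the containment $\Sub(A/I)\subseteq\Sub T$: it must be stated for $I=\Ann_A(T)$ specifically, since for an arbitrary nilpotent ideal $J$ there is no embedding of $A/J$ into $\add T$. This is why the contradiction is run with $J=I$ rather than trying to verify \eqref{eq:torsion-transversal} for all $J$. No other step is an obstacle.
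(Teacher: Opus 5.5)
Your proof is correct and is essentially the paper's argument. You assume $I=\Ann_A(T)\neq0$ and apply the hypothesis with $J=I$. You then move the resulting nonzero module into $\Sub T$ using the embedding $A/I\hookrightarrow T_0$ coming from $\ker f=I$, which contradicts the torsion separation theorem.
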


\begin{proof}
Let $T$ be self-orthogonal and $\tau$-tilting, and put
$I=\Ann_A(T)$. If $I\neq0$, then $I$ is nilpotent and
\eqref{eq:regular-quotient-transversal} supplies a nonzero module in
$\Tors_A(I)\cap\Sub(A/I)$. By
\eqref{eq:regular-quotient-embedding},
$\Sub(A/I)\subseteq\Sub T$. This contradicts Theorem
\ref{thm:torsion-separation}. Therefore $I=0$.
\end{proof}

\begin{corollary}
\label{cor:quotient-socle-detection}
If
\[
 \supp\topm(J_A)\cap\supp\soc((A/J)_A)\neq\varnothing
\]
for every nonzero nilpotent two-sided ideal $J$ of $A$, then every
self-orthogonal $\tau$-tilting $A$-module is $1$-tilting.
\end{corollary}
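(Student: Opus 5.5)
The plan is to reduce this corollary to Theorem \ref{thm:regular-quotient-transversal} by showing that the support hypothesis implies condition \eqref{eq:regular-quotient-transversal}. Fix a nonzero nilpotent two-sided ideal $J$ of $A$. By hypothesis, some simple module $S$ lies in $\supp\topm(J_A)\cap\supp\soc((A/J)_A)$. I will show that $S$ is a nonzero object of $\Tors_A(J)\cap\Sub(A/J)$.

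First, $S\in\supp\topm(J_A)$ means $S$ is a direct summand of $\topm J=J/J\rad A$. So there is an epimorphism $J\twoheadrightarrow\topm J\twoheadrightarrow S$, and hence $S\in\Fac J\subseteq\Filt(\Fac J)=\Tors_A(J)$.

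Second, $S\in\supp\soc((A/J)_A)$ means $S$ is isomorphic to a simple submodule of $\soc(A/J)\subseteq A/J$. Thus $S\hookrightarrow A/J$, and $S\in\Sub(A/J)$.

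Therefore $0\neq S\in\Tors_A(J)\cap\Sub(A/J)$ for every nonzero nilpotent two-sided ideal $J$. Theorem \ref{thm:regular-quotient-transversal} then gives that every self-orthogonal $\tau$-tilting $A$-module is $1$-tilting.

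There is no real obstacle here. The only point needing care is the convention that $\supp$ of a semisimple module such as $\topm J$ or $\soc(A/J)$ consists of isomorphism classes of its simple direct summands. With that convention, the composition factors of a semisimple module are exactly its simple summands, so membership in the support gives an honest quotient of $J$ (respectively, an honest submodule of $A/J$).
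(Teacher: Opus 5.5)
Your proposal is correct and is essentially the paper's own proof: a simple module in the intersection is a quotient of $J$, hence lies in $\Tors_A(J)$, and a submodule of $A/J$, hence lies in $\Sub(A/J)$, so condition \eqref{eq:regular-quotient-transversal} holds and Theorem \ref{thm:regular-quotient-transversal} applies. Your remark that the support of a semisimple module consists of the classes of its simple summands is the one small point the paper leaves implicit.
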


\begin{proof}
A simple module in the displayed intersection is a quotient of $J$
and a submodule of $A/J$. It is therefore a nonzero object of the
intersection in \eqref{eq:regular-quotient-transversal}.
\end{proof}

Top--socle separation makes condition
\eqref{eq:torsion-transversal} accessible through a finite set of
simple modules.

\begin{definition}\label{def:hitting}
Define the \emph{nilpotent ideal-top family} of $A$ by
\[
 \mathfrak T(A)=
 \bigl\{\supp\topm(J_A)\mid
 0\neq J\triangleleft A\text{ is nilpotent}\bigr\}.
\]
A subset $\Sigma\subseteq\Sim(A)$ is an \emph{ideal-top hitting set}
if $\Sigma\cap U\neq\varnothing$ for every $U\in\mathfrak T(A)$.
\end{definition}

\begin{corollary}[Ideal-top detection]\label{cor:hitting}
Let $T$ be a self-orthogonal $\tau$-tilting $A$-module.  If
$\supp\soc T$ is an ideal-top hitting set, then $T$ is a
$1$-tilting $A$-module.
\end{corollary}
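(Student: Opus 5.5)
The argument is by contradiction, combining Corollary~\ref{cor:top-socle} with the faithful $\tau$-tilting criterion recalled before Theorem~\ref{thm:orthogonality}. Put $I=\Ann_A(T)$ and suppose $I\neq0$. By the standard facts from \cite{CLZZ} quoted in Section~2, $I$ is a nilpotent two-sided ideal of $A$. It is therefore a nonzero nilpotent two-sided ideal, so $U:=\supp\topm(I_A)$ belongs to the family $\mathfrak T(A)$ of Definition~\ref{def:hitting}.

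Next I would check that $U$ is nonempty. Since $I_A$ is a nonzero finitely generated module over the Artin algebra $A$, Nakayama's lemma gives $I\neq I\rad A$, so $\topm(I_A)\neq0$. This is harmless, because the hitting condition is tested on members of $\mathfrak T(A)$ anyway; an empty $U$ could never be hit, so nonemptiness is automatic and only reassures us that the hypothesis is meaningful.

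Since $\Sigma:=\supp\soc T$ is assumed to be an ideal-top hitting set, we have $\Sigma\cap U\neq\varnothing$. That is, some simple module lies in both $\supp\topm(I_A)$ and $\supp\soc(T_A)$. This directly contradicts Corollary~\ref{cor:top-socle}, which states that these two supports are disjoint for a self-orthogonal $\tau$-tilting $T$. Hence $I=0$, and Corollary~\ref{cor:sub-equivalence}, the equivalence of (ii) and (iii), shows that $T$ is $1$-tilting.

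Equivalently, one may verify the hypothesis of Theorem~\ref{thm:torsion-transversal}. For each nonzero nilpotent $J$, a simple module $S\in\supp\topm(J_A)\cap\supp\soc T$ is a quotient of $J$ and a submodule of $T$, so $0\neq S\in\Tors_A(J)\cap\Sub T$. Theorem~\ref{thm:torsion-transversal} then applies. There is no real obstacle here. The only point needing care is the identification of $\supp\topm$ and $\supp\soc$ with the simple quotients and simple submodules, respectively. For $\topm$ this holds because $\topm(J_A)$ is semisimple and every simple quotient of $J$ factors through $J/J\rad A$; for $\soc$ it holds by the definition of the socle.
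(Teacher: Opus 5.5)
Your proof is correct and is essentially the paper's argument. The paper verifies the hypothesis of Theorem~\ref{thm:torsion-transversal} exactly as in your final paragraph, and your main route (applying the hitting condition only to $J=\Ann_A(T)$ and contradicting Corollary~\ref{cor:top-socle}) is the same argument with that theorem's one-line proof unpacked.
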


\begin{proof}
Let $0\neq J\triangleleft A$ be nilpotent.  The hitting assumption
provides a simple module
$S\in\supp\topm J\cap\supp\soc T$.  Then $S$ is both a quotient of
$J$ and a submodule of $T$, so
\[
              0\neq S\in\Tors_A(J)\cap\Sub T.
\]
Theorem \ref{thm:torsion-transversal} now applies.
\end{proof}

\begin{corollary}[Socle sincerity]\label{cor:socle-sincere}
If $T$ is a self-orthogonal $\tau$-tilting module and $\soc T$ is
sincere, then $T$ is a $1$-tilting module.
\end{corollary}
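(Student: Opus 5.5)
This follows directly from Corollary~\ref{cor:hitting}. Assume $\soc T$ is sincere, i.e.\ every simple $A$-module occurs in $\soc T$. Then $\supp\soc T=\Sim(A)$.

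The first step is to check that $\Sim(A)$ is an ideal-top hitting set. For this we need every $U\in\mathfrak T(A)$ to be nonempty. Let $0\neq J\triangleleft A$ be nilpotent. Then $J_A$ is a nonzero finitely generated right $A$-module over an Artin algebra. By Nakayama's lemma, $\topm(J_A)=J/J\rad A\neq0$, so $\supp\topm(J_A)\neq\varnothing$.

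Consequently $\Sim(A)\cap U=U\neq\varnothing$ for every $U\in\mathfrak T(A)$. So $\supp\soc T$ is an ideal-top hitting set, and Corollary~\ref{cor:hitting} shows that $T$ is $1$-tilting.

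There is no real obstacle here. The only point to verify is the nonemptiness of the tops of nonzero ideals, which is Nakayama's lemma. One could also argue directly: if $I=\Ann_A(T)\neq0$, any simple quotient of $I$ lies in $\supp\soc T$, which contradicts Corollary~\ref{cor:top-socle}. Hence $I=0$, and Corollary~\ref{cor:sub-equivalence} applies.
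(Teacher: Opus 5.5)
Your proposal is correct and matches the paper's own (implicit) argument, recorded in the hierarchy following the corollary: a sincere socle gives $\supp\soc T=\Sim(A)$, which trivially meets every member of $\mathfrak T(A)$, so Corollary~\ref{cor:hitting} applies. Your Nakayama check that $\supp\topm(J_A)\neq\varnothing$ for $J\neq0$ is the one detail the paper leaves unstated, and your direct alternative via Corollary~\ref{cor:top-socle} is equally valid.
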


For a self-orthogonal $\tau$-tilting module, the sufficient
conditions obtained so far form the hierarchy
\[
\begin{aligned}
 \soc T\text{ sincere}
 &\Longrightarrow \supp\soc T\text{ ideal-top hitting}\\
 &\Longrightarrow
 \Tors_A(J)\cap\Sub T\neq\{0\}
 \text{ for every }0\neq J\triangleleft A\text{ nilpotent}\\
 &\Longrightarrow T\text{ is }1\text{-tilting}.
\end{aligned}
\]
The first implication is strict by Example
\ref{ex:hitting-not-sincere}.

\begin{remark}\label{rem:finite-hitting}
Definition \ref{def:hitting} packages ideal data into a finite
set-system on $\Sim(A)$. It is enough to retain the inclusion-minimal
members of $\mathfrak T(A)$. Thus Corollary \ref{cor:hitting} can be
tested by a finite transversal problem even when the lattice of
two-sided ideals itself is infinite.
\end{remark}

\begin{remark}\label{rem:semibrick}
If a brick $B$ belongs to $\Tors_A(I)$, then $B$ cannot be cogenerated
by $T$. More generally, every nonzero object filtered by a semibrick
inside $\Tors_A(I)$ is excluded from $\Sub T$. This gives a
torsion-lattice formulation of the obstruction which does not depend
on composition factors alone; compare the role of semibricks in
$\tau$-tilting theory developed in \cite{Asai}.
\end{remark}

We next record two general mechanisms which force a module into
$\Sub T$. They become annihilator-detection criteria when combined
with Corollary \ref{cor:sub-equivalence}.

\begin{proposition}
\label{prop:injective-envelope}
Let $X,T\in\modA$. If the injective envelope $E(\soc X)$ belongs to
$\add T$, then $X\in\Sub T$.
\end{proposition}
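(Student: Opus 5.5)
The plan is to embed $X$ into the injective envelope of its socle. Over an Artin algebra every finitely generated module $X$ has an essential socle, since every nonzero submodule of a finite-length module contains a simple submodule. Hence the inclusion $\soc X\hookrightarrow X$ is an essential monomorphism. Let $\iota:\soc X\hookrightarrow E(\soc X)$ be the injective envelope. Because $E(\soc X)$ is injective, $\iota$ extends along $\soc X\hookrightarrow X$ to a morphism $\varphi:X\to E(\soc X)$ whose restriction to $\soc X$ is $\iota$.

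The key step is to show that $\varphi$ is injective. Its restriction to $\soc X$ equals $\iota$, which is injective, so $\ker\varphi\cap\soc X=0$. Since $\soc X$ is essential in $X$, this forces $\ker\varphi=0$. Note also that $E(\soc X)$ is finitely generated: it is a finite direct sum of injective envelopes of simple modules, each of which is $D$ of an indecomposable projective left module. So it is a legitimate object of $\modA$.

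Finally, since $E(\soc X)\in\add T$, it is a direct summand of some $T^m$, and composing gives a monomorphism $X\hookrightarrow E(\soc X)\hookrightarrow T^m$. Hence $X\in\Sub T$. The only point needing care is the essentiality of the socle, and that is standard for finite-length modules, so I do not expect any real obstacle. If $X=0$ the statement is trivial.
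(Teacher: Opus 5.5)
Your proof is correct and follows the paper's route. The paper simply notes that the socle is essential, so $E(X)=E(\soc X)$; your extension-plus-essentiality argument is exactly that identification written out, and since $E(\soc X)$ already lies in $\add T$, the final passage to $T^m$ is harmless but not even needed.
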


\begin{proof}
The socle of a finite-length module is essential, so
$E(X)=E(\soc X)$. Hence $X$ embeds in an object of $\add T$.
\end{proof}

For a module $T$, put
\[
 \Sigma_{\mathrm{inj}}(T)=
 \{[S]\in\Sim(A)\mid E(S)\in\add T\}.
\]

\begin{corollary}\label{cor:injective-support}
Let $T$ be self-orthogonal and $\tau$-tilting, and put
$I=\Ann_A(T)$.  If
\[
                \supp\soc I\subseteq\Sigma_{\mathrm{inj}}(T),
\]
then $I=0$ and $T$ is $1$-tilting.
\end{corollary}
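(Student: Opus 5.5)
The plan is to show that the hypothesis places $I$ in $\Sub T$, and then to invoke Corollary \ref{cor:sub-equivalence}.

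Set $I=\Ann_A(T)$ and decompose the socle of $I$ as $\soc I\cong\bigoplus_{i} S_i^{m_i}$. Each composition factor $S_i$ of $\soc I$ lies in $\Sigma_{\mathrm{inj}}(T)$ by hypothesis, so $E(S_i)\in\add T$ for every $i$.

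Injective envelopes commute with finite direct sums, so
\[
E(\soc I)\cong\bigoplus_i E(S_i)^{m_i},
\]
and this module lies in $\add T$, since $\add T$ is closed under finite direct sums.

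Proposition \ref{prop:injective-envelope}, applied with $X=I$, now gives $I\in\Sub T$. Concretely, $I$ is a finite-length module whose socle is essential, so
\[
I\hookrightarrow E(I)=E(\soc I)\in\add T.
\]

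Corollary \ref{cor:sub-equivalence} then forces $I=0$, and $T$ is $1$-tilting. Equivalently, Theorem \ref{thm:orthogonality} says the embedding $I\hookrightarrow T^m$ is zero.

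The only point that needs care is the degenerate case $I=0$. There $\soc I=0$ and the hypothesis holds vacuously, so the conclusion is immediate and nothing further needs to be checked.
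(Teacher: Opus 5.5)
Your proposal is correct and follows the paper's proof exactly: the hypothesis gives $E(\soc I)\cong\bigoplus_i E(S_i)^{m_i}\in\add T$, then Proposition \ref{prop:injective-envelope} places $I$ in $\Sub T$, and Corollary \ref{cor:sub-equivalence} forces $I=0$. Your explicit bookkeeping of the multiplicities $m_i$ simply spells out the paper's remark about ``taking enough copies.''
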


\begin{proof}
The support inclusion gives $E(\soc I)\in\add T$, after taking enough
copies to account for multiplicities. Apply Proposition
\ref{prop:injective-envelope} and Corollary
\ref{cor:sub-equivalence}.
\end{proof}

\begin{proposition}\label{prop:socle-lifting}
Let $X,T\in\modA$.  If
\[
        \soc X\in\Sub T
        \quad\text{and}\quad
        \Ext_A^1(X/\soc X,T)=0,
\]
then $X\in\Sub T$.
\end{proposition}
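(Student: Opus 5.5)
Write $Y=X/\soc X$. The plan is to lift an embedding of $\soc X$ into $\add T$ to an embedding of all of $X$, using the extension vanishing to extend the map and the essentiality of the socle to keep it injective. Fix a monomorphism $\iota:\soc X\hookrightarrow T^m$, which exists because $\soc X\in\Sub T$. Then apply $\Hom_A(-,T^m)$ to the short exact sequence
\[
 0\longrightarrow \soc X\xrightarrow{j} X\longrightarrow Y\longrightarrow 0 .
\]
This gives the exact segment
\[
 \Hom_A(X,T^m)\xrightarrow{j^*}\Hom_A(\soc X,T^m)\longrightarrow\Ext_A^1(Y,T^m).
\]
Since $\Ext_A^1(Y,T^m)\cong\Ext_A^1(Y,T)^m=0$ by hypothesis, the map $j^*$ is surjective. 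So there is a homomorphism $\varphi:X\to T^m$ with $\varphi\circ j=\iota$.

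Next we check that $\varphi$ is injective. Its kernel $K=\ker\varphi$ is a submodule of $X$, and
\[
 K\cap\soc X=\ker(\varphi|_{\soc X})=\ker\iota=0 .
\]
Every nonzero submodule of a finite-length module contains a simple submodule, and that simple submodule lies in $\soc X$. This is the essentiality of the socle already used in the proof of Proposition \ref{prop:injective-envelope}. Hence $K=0$.

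It follows that $\varphi$ embeds $X$ into $T^m\in\add T$, so $X\in\Sub T$.

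The only point needing care is that the extension vanishing is required against the same power $T^m$ that receives $\soc X$. This is immediate because $\Ext_A^1(Y,-)$ commutes with finite direct sums, so the argument has no real obstacle.
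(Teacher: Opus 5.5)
Your proof is correct and follows essentially the same route as the paper: both extend a monomorphism $\soc X\hookrightarrow T^m$ to a map $X\to T^m$ using the vanishing of $\Ext_A^1(X/\soc X,T^m)$, then show the extension is injective because a nonzero kernel would contain a simple submodule lying in $\soc X$. Your explicit remark that $\Ext_A^1(X/\soc X,T^m)\cong\Ext_A^1(X/\soc X,T)^m$ spells out a step the paper leaves implicit.
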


\begin{proof}
Choose a monomorphism $u:\soc X\to T^m$. Applying
$\Hom_A(-,T^m)$ to
\[
 0\longrightarrow\soc X\longrightarrow X\longrightarrow
 X/\soc X\longrightarrow0
\]
shows that $u$ extends to a map $\widetilde u:X\to T^m$. If
$\ker\widetilde u\neq0$, then it contains a simple submodule, which
must lie in $\soc X$. This contradicts the injectivity of $u$. Thus $\widetilde u$ is monomorphic.
\end{proof}

Complete orthogonality also produces dimension shifts along every
submodule of the annihilator.

\begin{proposition}
\label{prop:ann-shift}
Let $T$ be a self-orthogonal $\tau$-tilting module, set
$I=\Ann_A(T)$, and let $L\subseteq I$ be a submodule. Then
\[
                 \Hom_A(I/L,T)=0
\]
and there are natural isomorphisms
\[
 \Ext_A^{n+1}(I/L,T)\simeq\Ext_A^n(L,T)
 \qquad(n\geq0),
\]
where $\Ext_A^0(-,T)=\Hom_A(-,T)$.
\end{proposition}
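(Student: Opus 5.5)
The plan is to apply $\Hom_A(-,T)$ to the short exact sequence
\[
 0\longrightarrow L\longrightarrow I\longrightarrow I/L\longrightarrow 0
\]
and read off everything from the long exact sequence, using Theorem \ref{thm:orthogonality}. That theorem says $\Hom_A(I,T)=0$ and $\Ext_A^n(I,T)=0$ for all $n\geq1$, so every term involving $I$ in the long exact sequence vanishes.

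First I would treat degree zero. The beginning of the sequence reads
\[
 0\to\Hom_A(I/L,T)\to\Hom_A(I,T).
\]
Since $\Hom_A(I,T)=0$, this gives $\Hom_A(I/L,T)=0$ immediately. Alternatively, any map $I/L\to T$ composed with the epimorphism $I\to I/L$ is zero, so the map itself is zero.

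Next come the shifts. For each $n\geq0$, the relevant segment of the long exact sequence is
\[
 \Ext_A^n(I,T)\to\Ext_A^n(L,T)\xrightarrow{\ \delta\ }\Ext_A^{n+1}(I/L,T)\to\Ext_A^{n+1}(I,T).
\]
Both outer terms vanish by Theorem \ref{thm:orthogonality}: for $n=0$ the left term is $\Hom_A(I,T)=0$, and for $n\geq1$ it is $\Ext_A^n(I,T)=0$, while the right term is $\Ext_A^{n+1}(I,T)=0$ in all cases. Hence the connecting homomorphism $\delta$ is an isomorphism $\Ext_A^n(L,T)\simeq\Ext_A^{n+1}(I/L,T)$. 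Naturality follows because connecting homomorphisms are natural with respect to morphisms of short exact sequences. Concretely, $\delta$ is natural in $T$, and it is compatible with inclusions $L'\subseteq L\subseteq I$ through the induced map of sequences.

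There is no real obstacle here. The only point needing care is the case $n=0$, where the degree-zero vanishing of Theorem \ref{thm:orthogonality}, and hence the self-orthogonality hypothesis, is genuinely used: it is what makes $\Hom_A(L,T)\to\Ext_A^1(I/L,T)$ injective.
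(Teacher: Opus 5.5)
Your proposal is correct and matches the paper's proof exactly: the paper also applies $\Hom_A(-,T)$ to $0\to L\to I\to I/L\to0$ and uses the vanishing of $\Hom_A(I,T)$ and $\Ext_A^n(I,T)$ from Theorem \ref{thm:orthogonality}. You have simply written out the long exact sequence explicitly, including the $n=0$ case where $\Hom_A(I,T)=0$ is needed for injectivity of the connecting map.
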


\begin{proof}
Apply $\Hom_A(-,T)$ to
$0\to L\to I\to I/L\to0$ and use Theorem
\ref{thm:orthogonality}.
\end{proof}

Taking $L=\soc I$ gives
\begin{equation}\label{eq:socle-shift}
        \Ext_A^1(I/\soc I,T)\simeq\Hom_A(\soc I,T).
\end{equation}
Thus a nonzero map $\soc I\to T$ is exactly the obstruction to the
vanishing of $\Ext_A^1(I/\soc I,T)$. In particular, the two
hypotheses in Proposition \ref{prop:socle-lifting} cannot be verified
simultaneously for $X=I\neq0$.

Finally, for the usual Matlis duality $D$, one has
\begin{equation}\label{eq:dual-cogen}
                 I\in\Sub T
       \quad\Longleftrightarrow\quad
                 DI\in\Fac(DT)
\end{equation}
as modules over $A^{\mathrm{op}}$. Thus a filtration proof that
$DI\in\Fac(DT)$ is another possible route to faithfulness. In view
of Theorem \ref{thm:torsion-separation}, such a proof must force the
annihilator to vanish; this explains why a semibrick filtration of
$DI$ cannot be chosen independently of the orthogonality constraints.

 We end the section with the example which separates ideal-top hitting from socle sincerity.

 \begin{example}
\label{ex:hitting-not-sincere}
Let $A=kQ$ for $Q:1\xrightarrow{\alpha}2$, again using right
modules, and take $T=A_A$. The only nonzero nilpotent two-sided ideal
is $J=k\alpha$, and $J\simeq S_2$ as a right module. Hence
\[
                    \mathfrak T(A)=\bigl\{\{[S_2]\}\bigr\}.
\]
Now $\supp\soc(A_A)=\{[S_2]\}$, so this support is an ideal-top
hitting set. It is not sincere, since $S_1$ is absent. Thus the
hypothesis of Corollary \ref{cor:hitting} is strictly weaker than that
of Corollary \ref{cor:socle-sincere}.
\end{example}

\section{Applications to radical square zero algebras}

In this section we give applications of Theorem A and Theorem B to radical square zero algebras, which is a partial answer to the self-orthogonal $\tau$-tilting conjecture, see \cite{Zhang, CLZZ} for details.

For an Artin algebra $\Gamma$, write
$\findim\Gamma$ for the little finitistic dimension computed on
finitely generated right $\Gamma$-modules.

\begin{proposition}[Finitistic-dimension obstruction]
\label{prop:findim-obstruction}
Let $T$ be a self-orthogonal $\tau$-tilting $A$-module and put
$B=\End_A(T)^{\rm op}$. If there exists a nonzero module
$X\in\Fac T$ such that
\[
       \Hom_A(X,T)=0
       \quad\text{and}\quad
       \Ext_A^j(X,T)=0\qquad(j\geq1),
\]
then
\[
                    \findim B=\infty.
\]
\end{proposition}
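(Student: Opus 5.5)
The plan is to transport the module $X$ to a module over $B$ that has a projective resolution on which $\Hom_B(-,B)$ is exact. Dualizing that resolution will then produce $B$-modules of every finite projective dimension.

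\emph{Step 1: an $\add T$-resolution of $X$ inside $\Fac T$.} Since $T$ is $\tau$-tilting, $\Fac T$ is a functorially finite torsion class whose Ext-projective objects are exactly $\add T$. Moreover $\Ext_A^1(T,\Fac T)=0$ because $T$ is $\tau$-rigid. Hence every $Y\in\Fac T$ admits a short exact sequence $0\to Y'\to T'\to Y\to0$ with $T'\in\add T$ and $Y'\in\Fac T$, obtained from a right $\add T$-approximation (the Auslander--Smal\o{} description of Ext-projectives). Iterating from $X$ gives an exact sequence
\[
 \cdots\to T_2\to T_1\to T_0\to X\to0,\qquad T_i\in\add T,
\]
whose syzygies $K_n$ all lie in $\Fac T$.

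\emph{Step 2: a $B$-module with vanishing duals.} Apply $\Hom_A(T,-)$. Since $\Ext_A^1(T,K_n)=0$ for all $n$, the result is exact, so it is a projective resolution $P_\bullet\to M$ of $M=\Hom_A(T,X)$, with $P_i=\Hom_A(T,T_i)\in\add B$. Here $M\neq0$ because $X\in\Fac T$ is nonzero.

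Next, the standard isomorphism $\Hom_B(\Hom_A(T,T_i),\Hom_A(T,T))\simeq\Hom_A(T_i,T)$ identifies $\Hom_B(P_\bullet,B)$ with $\Hom_A(T_\bullet,T)$. Because $T$ is self-orthogonal, each $T_i$ is $\Hom_A(-,T)$-acyclic. A dimension shift along the short exact sequences $0\to K_{n+1}\to T_n\to K_n\to0$ therefore shows that the cohomology of $\Hom_A(T_\bullet,T)$ is $\Ext_A^\ast(X,T)$. By hypothesis this vanishes in every degree, including degree $0$. Hence the dual complex
\[
0\to P_0^\ast\to P_1^\ast\to P_2^\ast\to\cdots,\qquad (-)^\ast=\Hom_B(-,B),
\]
is exact. (Alone, this already shows $\operatorname{pd}_BM=\infty$.)

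\emph{Step 3: modules of every finite projective dimension.} For $n\ge1$ put $L_n=\operatorname{Coker}(P_{n-1}^\ast\to P_n^\ast)$. Exactness of the dual complex makes
\[
0\to P_0^\ast\to\cdots\to P_n^\ast\to L_n\to0
\]
a projective resolution, so $\operatorname{pd}L_n\le n$. Applying $(-)^\ast$ again and using $P^{\ast\ast}\simeq P$ for finitely generated projectives gives
\[
\Ext_B^n(L_n,B)\simeq\operatorname{Coker}(P_1\to P_0)=M\neq0.
\]
Thus $\operatorname{pd}L_n=n$ for every $n$, and so $\findim B=\infty$.

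The main point to check is sidedness. $T_A$ is a left $\End_A(T)$-module, that is, a right $B$-module. So $M=\Hom_A(T,X)$ is a right $\End_A(T)$-module, i.e.\ a left $B$-module, and the $L_n$, being built from the duals $P_i^\ast$, are right $B$-modules, which is exactly the side on which $\findim B$ is computed. A second point needing care is the existence in Step 1 of approximation sequences with kernel in $\Fac T$. This is the standard Ext-projective property of the functorially finite torsion class $\Fac T$, which I would cite from \cite{AIR} rather than reprove.
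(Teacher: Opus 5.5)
Your proof is correct. It differs from the paper's mainly in how it certifies the exact projective dimension.

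\textbf{The paper's route.} The paper applies $\Hom_A(-,T)$ directly to the sequences $0\to X_{m+1}\to T_m\to X_m\to0$. It splices the resulting sequences $0\to H_m\to P_m\to H_{m+1}\to0$ using $H_0=\Hom_A(X,T)=0$. It then obtains $\operatorname{pd}_B H_{n+1}=n$ from minimality. For this it requires minimal right $\add T$-approximations, reads them as projective covers in the exact category $\Fac T$, and transports the radical differentials along $(\add T)^{\mathrm{op}}\simeq\operatorname{proj}B$.

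\textbf{Your route.} You pass covariantly to the left $B$-module $M=\Hom_A(T,X)$ and dualize its projective resolution. Your $L_n$ coincides with the paper's $H_{n+1}$: both are the cokernel of $\Hom_A(T_{n-1},T)\to\Hom_A(T_n,T)$. What your route buys is in the last step. The double-dual computation $\Ext_B^n(L_n,B)\cong M\neq0$ pins down $\operatorname{pd}L_n=n$ with no minimality hypothesis. So arbitrary approximations suffice, and the Krull--Schmidt and radical bookkeeping disappears. It also recasts the obstruction cleanly as a nonzero finitely generated $B$-module $M$ with $\Hom_B(M,B)=0=\Ext_B^j(M,B)$ for all $j\geq1$, which is a standard mechanism forcing infinite finitistic dimension. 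The paper's route, in exchange, never leaves the contravariant side and exhibits the minimal resolutions explicitly.

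\textbf{A small point in your Step 1.} The ``hence'' is not earned by the reasons you list. Ext-projectivity of $T$ in $\Fac T$ does not by itself keep the kernel $Y'$ of a right $\add T$-approximation $T'\to Y$ inside $\Fac T$. The paper justifies this fact as follows. $T$ is classical tilting over $\overline A=A/\Ann_A(T)$, so $\Fac T=T^{\perp_1}$ in $\mathsf{mod}\,\overline A$. The approximation property together with $\Ext^1_{\overline A}(T,T')=0$ then gives $\Ext^1_{\overline A}(T,Y')=0$. Citing this is fine, but it is the real content of that step.
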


\begin{proof}
We regard $\Hom_A(-,T)$ as taking values in right $B$-modules via
postcomposition by endomorphisms of $T$.
Put $I=\Ann_A(T)$ and $\overline A=A/I$. Every module in $\Fac T$
is annihilated by $I$, and hence
\[
       \Fac_A T=\Fac_{\overline A}T,
       \qquad
       \Hom_A(U,V)=\Hom_{\overline A}(U,V)
\]
for $U,V\in\Fac T$. The $\overline A$-module $T$ is a classical
$1$-tilting module. Consequently,
\begin{equation}\label{eq:tilting-torsion-class}
        \Fac T=T^{\perp_1}
        \quad\text{inside }\mathsf{mod}\,\overline A.
\end{equation}
Equivalently, with its inherited exact structure, $\Fac T$ has enough
projectives and its projective objects are precisely those in
$\add T$; see \cite[Lemma~2.3]{CLZZ}.

Set $X_0=X$. Inductively, choose a minimal right
$\add T$-approximation $p_m:T_m\twoheadrightarrow X_m$ and put
$X_{m+1}=\ker p_m$.  Such an approximation is epimorphic because
$X_m$ is generated by $T$.  We claim that $X_{m+1}\in\Fac T$.
Indeed, applying $\Hom_{\overline A}(T,-)$ to
\[
0\longrightarrow X_{m+1}\longrightarrow T_m
 \xrightarrow{p_m}X_m\longrightarrow0
\]
and using both the approximation property and
$\Ext^1_{\overline A}(T,T_m)=0$ gives
$\Ext^1_{\overline A}(T,X_{m+1})=0$. The claim follows from
\eqref{eq:tilting-torsion-class}.

Self-orthogonality of $T$ over $A$ and dimension shifting now give
\[
             \Ext_A^j(X_m,T)=0\qquad(j\geq1,\ m\geq0).
\]
Write $H_m=\Hom_A(X_m,T)$ and $P_m=\Hom_A(T_m,T)$. Applying
$\Hom_A(-,T)$ to the preceding short exact sequences yields
\[
             0\longrightarrow H_m\longrightarrow P_m
             \longrightarrow H_{m+1}\longrightarrow0.
\]
Since $H_0=0$, splicing gives, for each $n\geq0$, an exact sequence
of right $B$-modules
\begin{equation}\label{eq:B-projective-resolution}
0\longrightarrow P_0\longrightarrow P_1\longrightarrow\cdots
 \longrightarrow P_n\longrightarrow H_{n+1}\longrightarrow0.
\end{equation}
Each $P_m$ is finitely generated projective. Moreover, the chosen
right $\add T$-approximations are projective covers in the
Krull--Schmidt exact category $\Fac T$. They therefore form a
minimal $\add T$-resolution: its differentials
$T_{m+1}\to T_m$ lie in the radical of $\add T$.
Under the equivalence
\[
       \Hom_A(-,T):(\add T)^{\rm op}\longrightarrow
                    \operatorname{proj}B,
\]
sequence \eqref{eq:B-projective-resolution} is therefore a minimal
projective resolution. Since $X\neq0$, also $T_0\neq0$, so
$P_0\neq0$. Minimality implies that $H_{n+1}\neq0$ and that
\[
       \operatorname{pd}_{B}H_{n+1}=n\qquad(n\geq0).
\]
Thus $B$ has finitely generated modules of arbitrarily large finite
projective dimension.
\end{proof}

\begin{corollary}
\label{cor:finite-findim-detection}
Let $T$ be a self-orthogonal $\tau$-tilting $A$-module and
$B=\End_A(T)^{\rm op}$. If $\findim B<\infty$, then
\[
 \Fac T\cap{}^{\perp_{\geq0}}T=\{0\}.
\]
In particular, if $I=\Ann_A(T)$ belongs to $\Fac T$, then $I=0$ and
$T$ is $1$-tilting.
\end{corollary}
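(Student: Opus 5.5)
The corollary is the contrapositive of Proposition \ref{prop:findim-obstruction}, so I will argue by contradiction and then specialize to the annihilator via Theorem \ref{thm:orthogonality}.

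\emph{First assertion.} Suppose $\findim B<\infty$ and take $X\in\Fac T\cap{}^{\perp_{\geq0}}T$. By the definition of ${}^{\perp_{\geq0}}T$, we have $\Hom_A(X,T)=0$ and $\Ext_A^j(X,T)=0$ for all $j\geq1$. If $X\neq0$, then $X$ satisfies every hypothesis of Proposition \ref{prop:findim-obstruction}, which gives $\findim B=\infty$, a contradiction. Hence $X=0$, and the intersection is $\{0\}$.

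\emph{Annihilator statement.} Put $I=\Ann_A(T)$ and assume $I\in\Fac T$. Theorem \ref{thm:orthogonality} gives $\Hom_A(I,T)=0$ and $\Ext_A^n(I,T)=0$ for all $n\geq1$, so $I\in\perpall{T}$. Therefore $I\in\Fac T\cap{}^{\perp_{\geq0}}T=\{0\}$, that is, $I=0$. Corollary \ref{cor:sub-equivalence} (equivalently, the faithful $\tau$-tilting criterion) then shows that $T$ is $1$-tilting.

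No step here is a real obstacle, since all the homological work is already done in Proposition \ref{prop:findim-obstruction}. The only point to check is that the hypotheses line up exactly: ${}^{\perp_{\geq0}}T$ includes the degree-zero condition $\Hom_A(X,T)=0$, and that is precisely what the proposition requires. This compatibility is what makes Theorem \ref{thm:orthogonality}, which supplies the degree-zero vanishing for $I$, the right input for the second assertion.
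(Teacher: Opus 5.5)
Your proposal is correct and matches the paper's proof: the first assertion is the contrapositive of Proposition~\ref{prop:findim-obstruction}, and the second follows because Theorem~\ref{thm:orthogonality} places $I$ in ${}^{\perp_{\geq0}}T$. Hence $I\in\Fac T$ forces $I=0$, and the faithful $\tau$-tilting criterion then shows that $T$ is $1$-tilting.
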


\begin{proof}
The first assertion is the contrapositive of Proposition
\ref{prop:findim-obstruction}. For the second, use Theorem
\ref{thm:orthogonality}, which gives $I\in{}^{\perp_{\geq0}}T$.
\end{proof}

We now obtain an application to radical square zero algebras, which is a partial answer to the self-orthogonal $\tau$-tilting conjecture, see \cite{CLZZ,Zhang} for details.

\begin{theorem}
\label{thm:radical-square-zero}
Let $A$ be a radical square zero Artin algebra.
Then every self-orthogonal $\tau$-tilting $A$-module is a classical
$1$-tilting module.
\end{theorem}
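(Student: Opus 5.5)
The plan is to combine the finitistic-dimension obstruction (Corollary \ref{cor:finite-findim-detection}) with the very simple shape of the annihilator in the radical square zero case. Several steps below are not yet checked, and one of them, the second case, has a genuine gap.

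\emph{Step 1: the annihilator is semisimple.} Let $T$ be self-orthogonal and $\tau$-tilting, and put $I=\Ann_A(T)$. Since $I$ is nilpotent, $I\subseteq\rad A$. Then $(\rad A)^2=0$ gives $I\cdot\rad A=0$, so $I_A$ is semisimple and $\topm I=\soc I=I$. Assume $I\neq0$ and fix a simple summand $S$ of $I_A$. By Theorem \ref{thm:orthogonality}, $S\in\perpall{T}$, that is, $\Hom_A(S,T)=0=\Ext_A^n(S,T)$ for all $n\geq1$. Also $SI\subseteq S\rad A=0$, so $S$ is a module over $\overline A=A/I$, and $T$ is a classical $1$-tilting $\overline A$-module.

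\emph{Step 2: $\findim B<\infty$.} I would show that $B=\End_A(T)^{\rm op}$ has finite little finitistic dimension. The algebra $\overline A$ is again radical square zero, and radical square zero algebras have finite finitistic dimension; I am recalling this as a known fact and have not checked it. For a tilting module of projective dimension at most one, the finitistic dimensions of $\overline A$ and of $\End_{\overline A}(T)^{\rm op}=B$ differ by at most one; this I also assume as a standard tilting-theory result. Hence $\findim B<\infty$, and Corollary \ref{cor:finite-findim-detection} gives $\Fac T\cap\perpall{T}=\{0\}$.

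\emph{Step 3: contradiction via torsion theory.} Consider the torsion pair $(\Fac T,\{Y\mid\Hom(T,Y)=0\})$ in $\mathsf{mod}\,\overline A$. Since $S$ is simple, either $S\in\Fac T$ or $\Hom_A(T,S)=0$.

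In the first case, $0\neq S\in\Fac T\cap\perpall{T}$, which contradicts Step 2.

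The second case, $\Hom_A(T,S)=0$, is the main obstacle. There $S$ is torsionfree and $\Ext^1_{\overline A}(T,S)\neq0$. I would take the universal extension $0\to S\to Y\to T'\to0$ with $Y\in\Fac T$ and $T'\in\add T$. I would then try to use the radical-square-zero structure, namely $\Omega S=\rad P_S$ semisimple, together with the isomorphisms $\Ext^{n}(S,T)\simeq\Ext^{n-1}(\rad P_S,T)$ for $n\geq2$. The aim is to produce another simple summand of $I$, or another module built from $S$, that lies in $\Fac T\cap\perpall{T}$.

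If that fails, the fallback is the top--socle separation of Corollary \ref{cor:top-socle}. There I would aim to show directly that some simple summand of $I$ lies in $\supp\soc T$, and then apply Theorem \ref{thm:torsion-transversal}. The natural route is through the embedding \eqref{eq:regular-quotient-embedding} $A/I\hookrightarrow T_0$ and the fact that $\soc(A/I)\supseteq\rad A/I$.
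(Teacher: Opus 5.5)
Your Steps 1 and 2 and Case 1 of Step 3 are sound and match the paper. The paper gets $\findim B<\infty$ from Green--Huisgen-Zimmermann together with Pan--Xi's transfer along the derived equivalence; your tilting bound is a special case of the same transfer. Since $\overline A$ and $\overline A^{\rm op}$ are both radical square zero, the side conventions are harmless. The genuine gap is Case 2 of Step 3, $\Hom_A(T,S)=0$, which you leave open. The universal-extension and dimension-shift ideas you sketch for it are not shown to produce anything.

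The missing idea is that Case 2 cannot occur, and you already have the ingredients to see this.
\begin{enumerate}[\rm(1)]
\item Because $(\rad A)^2=0$, the module $\rad T=T\rad A$ is annihilated by $\rad A$. Hence it is semisimple and $\rad T\subseteq\soc T$. So every composition factor of $T$ occurs either in $\topm T$ or in $\soc T$.
\item If $\Hom_A(T,S)=0$, then $S$ is not a summand of $\topm T$.
\item But $T$ is $\tau$-tilting, hence sincere, so $S$ is a composition factor of $T$. It must therefore be a summand of the semisimple module $\rad T$, which gives a monomorphism $S\hookrightarrow T$.
\item This contradicts $\Hom_A(S,T)=0$ from your Step 1 (Theorem \ref{thm:orthogonality}).
\end{enumerate}
So only Case 1 occurs: $S\in\supp\topm T$, hence $0\neq S\in\Fac T\cap\perpall{T}$, and Corollary \ref{cor:finite-findim-detection} finishes the proof.

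This is exactly the paper's argument. The paper runs it over every simple summand to get $I\in\Fac T$ and then applies Proposition \ref{prop:findim-obstruction}, but one summand already suffices. Your fallback of placing a simple summand of $I$ into $\soc T$ pointed in the right direction. No detour through the embedding $A/I\hookrightarrow T_0$ is needed, because the top/socle dichotomy for composition factors of $T$ does the work directly.
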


\begin{proof}
Let $T$ be self-orthogonal and $\tau$-tilting, and set
$I=\Ann_A(T)$ and $B=\End_A(T)^{\rm op}$. Suppose that $I\neq0$.
Since $I$ is
nilpotent, $I\subseteq\rad A$.  Hence
\[
                         I\rad A=0,
\]
so $I_A$ is semisimple. Let $S$ be a simple direct summand of $I$.
By Theorem \ref{thm:orthogonality},
\[
       \Hom_A(S,T)=0,
       \qquad
       \Ext_A^j(S,T)=0\quad(j\geq1).
\]

A $\tau$-tilting module is sincere. Thus $S$ occurs as a composition
factor of $T$. Since $\rad^2 A=0$, one has
$\rad T=T\rad A\subseteq\soc T$. If $S$ did not occur in $\topm T$,
then its occurrence in $T$ would lie in $\rad T$, yielding a nonzero
map $S\to T$, contrary to $\Hom_A(S,T)=0$. Therefore
$S\in\supp\topm T$. Hence there is an epimorphism
$T\twoheadrightarrow S$, and so $S\in\Fac T$. Repeating this for
every simple summand of the semisimple module $I$ gives
\[
                              I\in\Fac T.
\]
Proposition \ref{prop:findim-obstruction}, applied to $X=I$, now gives
\[
                      \findim B=\infty.
\]

On the other hand, put $\overline A=A/I$. The module $T$ is a
classical $1$-tilting $\overline A$-module, and
$B\simeq\End_{\overline A}(T)^{\rm op}$. The algebras
$\End_{\overline A}(T)$ and $\overline A$ are derived equivalent;
hence $B$ and $\overline A^{\rm op}$ are derived equivalent.
Moreover, $\rad^2\overline A=0$, so
$\findim\overline A^{\rm op}<\infty$ by the radical-cube-zero theorem
of Green and Huisgen-Zimmermann \cite[Theorem 16]{GreenHuisgen}. Finiteness of the
little finitistic dimension is preserved under derived equivalence
by Pan-Xi \cite[Theorem 1.1]{PanXi}. Therefore
\[
                     \findim B<\infty,
\]
a contradiction. Thus $I=0$, and the faithful $\tau$-tilting
criterion yields that $T$ is $1$-tilting.
\end{proof}

One may ask whether there is a similar result on algebras with radical cube zero. In general, we don't know. However, we have the following result.

\begin{proposition}
\label{prop:radical-cube-zero-reduction}
Let $A$ be an Artin algebra with $\rad^3 A=0$, and let
$T$ be a self-orthogonal $\tau$-tilting $A$-module. Put
$I=\Ann_A(T)$ and $B=\End_A(T)^{\rm op}$. Then
\[
                    \findim B<\infty.
\]
Consequently, if $I\neq0$, then necessarily
\[
                              I\notin\Fac T.
\]
More generally,
\[
                  \Fac T\cap{}^{\perp_{\geq0}}T=\{0\}.
\]
\end{proposition}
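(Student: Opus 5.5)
We follow the template of the proof of Theorem \ref{thm:radical-square-zero}, but use finitistic-dimension finiteness directly instead of reaching a contradiction. Put $\overline A=A/I$. By the standard facts recalled after Lemma \ref{lem:kernel}, $T$ is a classical $1$-tilting $\overline A$-module, and $B\simeq\End_{\overline A}(T)^{\rm op}$. The plan is to show $\findim B<\infty$ by transporting finiteness from $\overline A^{\rm op}$ along a derived equivalence.

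\textbf{Step 1.} We first note that $\rad\overline A=(\rad A+I)/I$, since $I$ is nilpotent and hence contained in $\rad A$. Therefore $\rad^3\overline A$ is the image of $\rad^3A=0$, so $\rad^3\overline A=0$. The same holds for the opposite algebra, because $\rad(\overline A^{\rm op})=(\rad\overline A)^{\rm op}$.

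\textbf{Step 2.} The theorem of Green and Huisgen-Zimmermann \cite[Theorem 16]{GreenHuisgen} applies to radical cube zero algebras; this is the case cited in the proof of Theorem \ref{thm:radical-square-zero}. It gives $\findim\overline A^{\rm op}<\infty$.

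\textbf{Step 3.} Since $T_{\overline A}$ is a classical tilting module, $\End_{\overline A}(T)$ is derived equivalent to $\overline A$. Passing to opposite algebras, $B$ is derived equivalent to $\overline A^{\rm op}$. By Pan--Xi \cite[Theorem 1.1]{PanXi}, finiteness of the little finitistic dimension is invariant under derived equivalence, so $\findim B<\infty$.

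\textbf{Step 4.} The remaining assertions follow from Corollary \ref{cor:finite-findim-detection}. Finite $\findim B$ gives $\Fac T\cap{}^{\perp_{\geq0}}T=\{0\}$. Theorem \ref{thm:orthogonality} places $I$ in ${}^{\perp_{\geq0}}T$. Hence, if $I\neq0$, then $I\notin\Fac T$.

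The main obstacle is bookkeeping rather than substance, in Steps 2 and 3. First, the Green--Huisgen-Zimmermann result must be invoked on the correct side. We avoid this issue because the radical cube zero hypothesis is left-right symmetric, so the result can be applied to either $\overline A$ or $\overline A^{\rm op}$. Second, the derived equivalence must be matched with right modules over $B$. This is handled by taking opposites consistently, exactly as in the proof of Theorem \ref{thm:radical-square-zero}.
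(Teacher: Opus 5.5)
Your proposal is correct and follows the paper's proof step for step: $\rad^3$ vanishing passes to $\overline A=A/I$, Green--Huisgen-Zimmermann gives $\findim\overline A^{\rm op}<\infty$, and the tilting derived equivalence together with Pan--Xi transfers finiteness to $B$; Corollary~\ref{cor:finite-findim-detection} and Theorem~\ref{thm:orthogonality} then give the remaining assertions. Your remarks on the left--right symmetry of the radical cube zero condition and on taking opposites consistently make explicit what the paper leaves implicit.
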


\begin{proof}
The quotient $\overline A=A/I$ again satisfies
$\rad^3\overline A=0$. Green and Huisgen-Zimmerman's theorem \cite[Theorem 16]{GreenHuisgen} gives
$\findim\overline A^{\rm op}<\infty$. Since $T$ is a
$1$-tilting $\overline A$-module, $B$ is derived equivalent to
$\overline A^{\rm op}$, and Pan-Xi's theorem \cite[Theorem 1.1]{PanXi} gives
$\findim B<\infty$. The remaining assertions
follow from Corollary \ref{cor:finite-findim-detection} and Theorem
\ref{thm:orthogonality}.
\end{proof}

\begin{remark}
\label{rem:radical-cube-zero-bottleneck}
Proposition \ref{prop:radical-cube-zero-reduction} isolates the missing
step for $\rad^3 A=0$: one has to force the annihilator into
$\Fac T$. The argument for Theorem \ref{thm:radical-square-zero}
does not extend formally. Indeed, when $\rad^3 A=0$, a simple
composition factor which is absent from $\soc T$ may occur in the
middle Loewy layer $T\rad A/T\rad^2 A$ rather than in $\topm T$.
Thus the radical-square-zero proof fails for a precise structural
reason, not because of the finitistic-dimension step. Any proof of the
radical-cube-zero case may therefore be reduced to controlling this
middle-layer escape.
\end{remark}

\section{Applications to triangular matrix algebras}

In this section we prove the projective-support criterion and apply it to triangular
matrix algebras and one-point extensions. We also give the explicit
$51$-dimensional family.

We now turn the ideal-top criterion into conditions on the algebra
alone. Assume for the moment that $A$ is basic, with simple modules
$S_1,\dots,S_n$ and indecomposable projectives $P_i=e_iA$.
The conditions below are unchanged under Morita equivalence when they
are formulated for a basic representative.

\begin{lemma}
\label{lem:projective-support-transversal}
If $M$ is sincere, then
\[
            \supp\soc M\cap\supp P_i\neq\varnothing
            \qquad(1\leq i\leq n).
\]
\end{lemma}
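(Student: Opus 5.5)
The plan is to use sincerity to produce a nonzero map $P_i\to M$, and then look at where its image meets the socle of $M$. Fix $i$ with $1\leq i\leq n$. Since $M$ is sincere, $S_i$ is a composition factor of $M$. Because $A$ is basic and $P_i=e_iA$, this gives $\Hom_A(P_i,M)\simeq Me_i\neq0$. Choose a nonzero homomorphism $g:P_i\to M$ and put $U=\operatorname{Im}g$.

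Next I will locate a socle simple inside $U$. The module $U$ is a nonzero submodule of the finite-length module $M$, so it has a simple submodule $S$. This $S$ is a submodule of $M$, hence $S\subseteq\soc M$ and $[S]\in\supp\soc M$.

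Finally I check that $S$ also lies in the support of $P_i$. Since $U\simeq P_i/\ker g$ is a quotient of $P_i$, every composition factor of $U$ is a composition factor of $P_i$. In particular $S$, being a submodule of $U$, is a composition factor of $P_i$, so $[S]\in\supp P_i$. Hence $[S]\in\supp\soc M\cap\supp P_i$, and this intersection is nonempty for every $i$.

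The argument has essentially no obstacle. The only point needing care is the identification $\Hom_A(P_i,M)\simeq Me_i$ together with the fact that $Me_i\neq0$ exactly when $S_i$ occurs in $M$; this is where the basic assumption and the standard idempotent description of composition multiplicities are used. Morita invariance of the statement is already noted in the text before the lemma.
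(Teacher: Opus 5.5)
Your proposal is correct and follows essentially the same route as the paper's proof: a nonzero map $P_i\to M$ coming from $Me_i\neq0$, then a simple submodule of its image, which lies in $\soc M$ and is a composition factor of $P_i$. One small remark: the identification $\Hom_A(e_iA,M)\simeq Me_i$, and the fact that $Me_i\neq0$ exactly when $S_i$ occurs in $M$, hold for any primitive idempotent $e_i$, so basicness is not really needed at that step.
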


\begin{proof}
Sincerity gives $Me_i\neq0$, or equivalently a nonzero map
$P_i\to M$.  Its image is a nonzero quotient of $P_i$. Any simple
submodule of this image belongs to $\soc M$ and is a composition
factor of $P_i$.
\end{proof}

Call a subset $U\subseteq\Sim(A)$ \emph{projective-support
absorbing} if $\supp P_i\subseteq U$ for some $i$.

\begin{theorem}[Projective-support absorption]
\label{thm:projective-support-absorption}
Assume that $\supp\topm(J_A)$ is projective-support absorbing for
every nonzero nilpotent two-sided ideal $J$ of $A$. Then every
self-orthogonal $\tau$-tilting $A$-module is $1$-tilting.
\end{theorem}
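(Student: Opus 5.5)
The plan is to reduce the statement to Corollary \ref{cor:hitting} (ideal-top detection). Let $T$ be a self-orthogonal $\tau$-tilting $A$-module; by Morita invariance we may assume $A$ basic. It then suffices to show that $\supp\soc T$ is an ideal-top hitting set. That is, we must check that $\supp\soc T\cap\supp\topm(J_A)\neq\varnothing$ for every nonzero nilpotent two-sided ideal $J$.

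The key input is sincerity. A $\tau$-tilting module is sincere, so Lemma \ref{lem:projective-support-transversal} applies with $M=T$ and gives
\[
 \supp\soc T\cap\supp P_i\neq\varnothing\qquad(1\leq i\leq n).
\]
Now fix a nonzero nilpotent ideal $J$. By hypothesis, $U=\supp\topm(J_A)$ is projective-support absorbing, so there is an index $i$ with $\supp P_i\subseteq U$. Choose a simple $S\in\supp\soc T\cap\supp P_i$. Then $S\in\supp\soc T\cap U$, so the intersection is nonempty. Since $J$ was arbitrary, $\supp\soc T$ meets every member of $\mathfrak T(A)$, i.e.\ it is an ideal-top hitting set.

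Corollary \ref{cor:hitting} then shows that $T$ is $1$-tilting. Equivalently, one can unwind this: $S$ is a simple quotient of $J$ and a submodule of $T$, so $0\neq S\in\Tors_A(J)\cap\Sub T$, and Theorem \ref{thm:torsion-transversal} applies. In the concrete case $J=\Ann_A(T)$, this directly contradicts Corollary \ref{cor:top-socle} unless $\Ann_A(T)=0$.

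The only point needing care is the Morita reduction. We need that nilpotent ideals, their tops, $\supp P_i$, sincerity, self-orthogonality and $\tau$-tilting are all transported along a Morita equivalence; the paper already asserts this for the conditions. Beyond that, the argument is a short chain of inclusions, and I expect no substantive obstacle.
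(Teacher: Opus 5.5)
Your proposal is correct and is essentially the paper's own proof: sincerity of $T$ together with Lemma \ref{lem:projective-support-transversal} gives $\supp\soc T\cap\supp P_i\neq\varnothing$. The absorption hypothesis then upgrades this to $\supp\soc T\cap\supp\topm(J_A)\neq\varnothing$ for every nonzero nilpotent $J$, and Corollary \ref{cor:hitting} finishes. Your remarks on the Morita reduction and on unwinding through Theorem \ref{thm:torsion-transversal} are harmless additions that the paper leaves implicit.
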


\begin{proof}
Let $T$ be self-orthogonal and $\tau$-tilting. For every nonzero
nilpotent $J$, choose $i$ such that
$\supp P_i\subseteq\supp\topm J$. Since $T$ is sincere, Lemma
\ref{lem:projective-support-transversal} gives
\[
 \varnothing\neq\supp\soc T\cap\supp P_i
 \subseteq\supp\soc T\cap\supp\topm J.
\]
Thus $\supp\soc T$ is an ideal-top hitting set, and Corollary
\ref{cor:hitting} applies.
\end{proof}

Put
\[
 \mathcal H(A)=
 \{i\mid\supp P_i=\{[S_i]\}\}.
\]
The modules $P_i$ with $i\in\mathcal H(A)$ need not be simple; they
may have arbitrary Loewy length, but all their composition factors
are isomorphic to $S_i$.

\begin{corollary}
\label{cor:homogeneous-radical}
Suppose that
\begin{equation}\label{eq:homogeneous-radical}
 \supp(\rad A)_A\subseteq
 \{[S_i]\mid i\in\mathcal H(A)\}.
\end{equation}
Then every self-orthogonal $\tau$-tilting $A$-module is
$1$-tilting.
\end{corollary}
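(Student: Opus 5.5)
The plan is to reduce to Theorem~\ref{thm:projective-support-absorption}. We show that under \eqref{eq:homogeneous-radical}, for every nonzero nilpotent two-sided ideal $J$ the set $\supp\topm(J_A)$ contains $[S_i]$ for some $i\in\mathcal H(A)$. Since $\supp P_i=\{[S_i]\}$ for such $i$, this gives $\supp P_i\subseteq\supp\topm(J_A)$, so $\supp\topm(J_A)$ is projective-support absorbing.

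The steps are as follows. First, since $A$ is an Artin algebra, $\rad A$ is the largest nilpotent two-sided ideal. Hence every nilpotent two-sided ideal satisfies $J\subseteq\rad A$, and $J_A$ is a submodule of $(\rad A)_A$. Second, composition factors of a submodule are composition factors of the ambient module. So $\supp J_A\subseteq\supp(\rad A)_A\subseteq\{[S_i]\mid i\in\mathcal H(A)\}$ by \eqref{eq:homogeneous-radical}. Third, $J\neq0$ has finite length, so $\topm(J_A)\neq0$. Any simple summand of $\topm J_A$ is a composition factor of $J$, hence isomorphic to some $S_i$ with $i\in\mathcal H(A)$. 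Fourth, $\supp P_i=\{[S_i]\}\subseteq\supp\topm(J_A)$, as required. Then Theorem~\ref{thm:projective-support-absorption} yields the conclusion.

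The only point needing care is the standing basic assumption. The conditions \eqref{eq:homogeneous-radical} and $\mathcal H(A)$ are formulated for a basic representative, and the property of being a self-orthogonal $\tau$-tilting or $1$-tilting module is Morita invariant. So we may pass to the basic algebra. No real obstacle is expected; the argument is a direct containment check.
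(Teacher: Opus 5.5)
Your proposal is correct and matches the paper's proof: nilpotent ideals lie in $\rad A$, so any $[S_i]\in\supp\topm(J_A)$ has $i\in\mathcal H(A)$ by \eqref{eq:homogeneous-radical}. This gives $\supp P_i=\{[S_i]\}\subseteq\supp\topm(J_A)$, and Theorem~\ref{thm:projective-support-absorption} finishes; you only spell out the containment $\supp\topm(J_A)\subseteq\supp J_A\subseteq\supp(\rad A)_A$ and the reduction to the basic case, which the paper leaves implicit.
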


\begin{proof}
Every nilpotent ideal $J$ is contained in $\rad A$. If
$0\neq J$, choose $[S_i]\in\supp\topm J$. Condition
\eqref{eq:homogeneous-radical} gives
$\supp P_i=\{[S_i]\}\subseteq\supp\topm J$. Apply Theorem
\ref{thm:projective-support-absorption}.
\end{proof}

Condition \eqref{eq:homogeneous-radical} has a transparent quiver
form. For a bound quiver algebra, it holds whenever every vertex
which occurs as the target of an arrow emits no arrow to a different
vertex; loops at such a vertex are allowed. Thus, after deleting
loops, all arrows run from source vertices to terminal vertices, but
the local algebras carried by the terminal vertices may have
arbitrary Loewy length and representation type.

\begin{corollary}[Semisimple-source triangular algebras]
\label{cor:triangular-family}
Assume that $A$ is Morita equivalent to
\[
 A'=\begin{pmatrix}
       S&M\\
       0&C_1\times\cdots\times C_s
    \end{pmatrix},
\]
where $S$ is a semisimple Artin algebra, each $C_j$ is a local Artin
algebra, and ${}_SM_{C_1\times\cdots\times C_s}$ has finite length on
both sides. Then every
self-orthogonal $\tau$-tilting $A$-module is $1$-tilting.
\end{corollary}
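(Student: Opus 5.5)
The plan is to reduce to Corollary~\ref{cor:homogeneous-radical}. The conclusion, and the ideal-theoretic hypotheses behind it, are invariant under Morita equivalence, since $\tau$, $\Ext$, annihilators and $\add$ are all transported by the equivalence. So I will replace $A$ by the basic algebra of $A'$. This basic algebra is again of the displayed triangular form: $S$ becomes a product of division rings $D_1\times\cdots\times D_r$, each $C_j$ becomes its basic version (still local), and $M$ becomes $eMf$ for the corresponding idempotents. In the statement of Corollary~\ref{cor:homogeneous-radical}, $\mathcal H(A)$ and $\supp$ refer to composition factors, which are Morita invariant. I could therefore work directly with $A'$ and its complete set of primitive idempotents $e_1,\dots,e_r$ (from $S$) and $f_1,\dots,f_s$ (the identities of the $C_j$). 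I would note, though, that the paper formulates these conditions for a basic representative.

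Next I compute the indecomposable right projectives. Write $\Gamma=C_1\times\cdots\times C_s$. For the idempotent $f_j$ we have $f_jA'=(0\;\; C_j)$, which is the projective $C_j$-module inflated to $A'$. All of its composition factors are the simple module at $f_j$, because $C_j$ is local and $f_jA'f_{j'}=0$ for $j\neq j'$. Hence every vertex $f_j$ lies in $\mathcal H(A')$.

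For $e_i$ we have $e_iA'=(e_iS\;\; e_iM)$. Its radical is $(0\;\; e_iM)$, because $S$ is semisimple and $e_iSe_{i'}=0$ for $i\ne i'$. Now compute $\rad A'$. It equals $\begin{pmatrix}0&M\\0&\rad\Gamma\end{pmatrix}$, using the standard description of the radical of a triangular matrix algebra with $\rad S=0$. As a right module, this ideal is annihilated on the right by each $e_i$: entries in the first column are $0$, and $M e_i$ makes no sense because $M$ sits in the second column. Concretely, $(\rad A')e_i=0$. So no composition factor of $(\rad A')_{A'}$ is a simple at a vertex $e_i$. Therefore $\supp(\rad A')\subseteq\{[S_{f_j}]\}\subseteq\{[S_k]\mid k\in\mathcal H(A')\}$, which is condition~\eqref{eq:homogeneous-radical}. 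Corollary~\ref{cor:homogeneous-radical} then gives the claim.

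The main obstacle I expect is bookkeeping rather than an idea. I need to verify the radical formula for the triangular algebra, and to check that the multiplicity of the simple at vertex $v$ in a right module $X$ is detected by $Xe_v\neq 0$. The finite-length hypothesis on $M$ ensures $A'$ is an Artin algebra, so that these formulas make sense. I also need to be careful that the passage to the basic algebra preserves locality of the $C_j$ and the triangular shape. If the Morita reduction proved awkward, I would instead argue directly that composition-factor supports can be read off via $Xe\neq0$ for the primitive idempotents of $A'$, which does not require basicness.
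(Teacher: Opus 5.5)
Your proposal is correct and follows essentially the same route as the paper: pass to a basic representative, note that $\rad A'=\begin{pmatrix}0&M\\0&\rad(C_1\times\cdots\times C_s)\end{pmatrix}$ is annihilated on the right by the idempotents coming from $S$ (so all its composition factors sit at the $C_j$-vertices), observe that each projective $f_jA'$ has all its composition factors at its own vertex, and conclude with Corollary~\ref{cor:homogeneous-radical}. Your explicit idempotent computations simply spell out the two steps that the paper states without verification.
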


\begin{proof}
Passing to a basic representative, we may assume that $S$ is a product
of division rings. The radical of $A'$ is
\[
 \rad A'=\begin{pmatrix}
            0&M\\
            0&\rad(C_1\times\cdots\times C_s)
          \end{pmatrix}.
\]
Every simple composition factor of this right module is the unique
simple module of one of the local algebras $C_j$. The corresponding
indecomposable projective $A'$-module is supported only at that
vertex. Hence \eqref{eq:homogeneous-radical} holds, and Corollary
\ref{cor:homogeneous-radical} applies.
\end{proof}

Here and below, by a source one-point extension we mean an algebra of
the displayed form. In particular, every such extension
\[
                    \begin{pmatrix}k&M\\0&C\end{pmatrix}
\]
of an arbitrary local algebra $C$ has the asserted property; no
projectivity assumption on $M$ is required. Support $\tau$-tilting
modules over one-point extensions have been studied in
\cite{GaoXie,Suarez}, while $\tau$-tilting finiteness for triangular
matrix algebras was investigated in \cite{AiharaHonma}. Those results
concern construction or finiteness; the conclusion here instead
concerns automatic faithfulness under self-orthogonality.

\begin{example}[A $51$-dimensional example]
\label{ex:triangular-51}
Let $k$ be an infinite field and set
\[
 C_{m,n}=k[x,y]/(x^m,y^n),\qquad
 A_{m,n}=\begin{pmatrix}k&C_{m,n}\\0&C_{m,n}\end{pmatrix}
 \quad(m,n\geq2).
\]
Then $\dim_k A_{m,n}=2mn+1$, and Corollary
\ref{cor:triangular-family} shows that every self-orthogonal
$\tau$-tilting $A_{m,n}$-module is $1$-tilting.  
Taking $m=n=5$ gives a $51$-dimensional example $A_{5,5}$. Then
\vspace{0.2cm}
\begin{enumerate}
\item $A_{5,5}$ is given by the quiver
with one arrow $\alpha:1\to2$, two loops $x,y$ at vertex $2$, and
relations
\[
                       x^5=0=y^5,\qquad xy=yx.
\]

\item $C=C_{5,5}$ and $A_{5,5}$ is representation infinite.  

 Let
\[
          F:\mathop{\rm mod}\nolimits\text{-}C
          \longrightarrow\mathop{\rm mod}\nolimits\text{-}A_{5,5},
          \qquad X\longmapsto (0,X),
\]
be the embedding at the second vertex. It is exact and fully faithful
and sends projective $C$-modules to projective $A_{5,5}$-modules. The
algebra $C$ maps onto $R=k[x,y]/(x,y)^2$. For each $\lambda\in k$, let
$X_\lambda$ be the two-dimensional $R$-module on which
\[
             x\longmapsto N=\begin{pmatrix}0&1\\0&0\end{pmatrix},
             \qquad y\longmapsto\lambda N.
\]
The endomorphism ring of $X_\lambda$ is the local algebra $k[N]$, so
$X_\lambda$ is indecomposable. An isomorphism
$X_\lambda\simeq X_\mu$ would commute with $N$ and force
$\lambda=\mu$. Consequently, $C$ and $A_{5,5}$ are
representation-infinite.

\item $A_{5,5}$ is CM-infinite of infinite global dimension.

 The algebra $C$ is local symmetric. Hence every finitely generated
$C$-module is Gorenstein-projective. If $P^\bullet$ is a totally
acyclic complex of projective $C$-modules, then $F(P^\bullet)$ is a
complex of projective $A_{5,5}$-modules and, for either indecomposable
projective $A_{5,5}$-module $Q$, one has a natural identification
\[
 \Hom_{A_{5,5}}(F(P^\bullet),Q)
       \simeq \Hom_C(P^\bullet,C).
\]
Thus $F(X)$ is Gorenstein-projective for every $C$-module $X$.  The
modules $F(X_\lambda)$ are pairwise nonisomorphic and indecomposable,
so $A_{5,5}$ is CM-infinite. The simple $C$-module is nonprojective;
its image under $F$ is therefore a nonprojective
Gorenstein-projective $A_{5,5}$-module and has infinite projective
dimension. Hence $\operatorname{gl.dim}A_{5,5}=\infty$.

\item $A_{5,5}$ is neither gentle nor self-injective.

Since the simple $S_1\simeq D(A_{5,5}e_1)$ at the source
vertex is injective, but its projective cover has the nonzero
radical $\alpha C$, then it is not self-injective. It is not gentle either. Indeed, at vertex $2$ put
$V=\rad C/\rad^2C$. For every $0\neq u\in V$, multiplication induces
an injective map
\[
                  V\longrightarrow\rad^2C/\rad^3C,
                  \qquad v\longmapsto uv.
\]
Here injectivity follows from the linear independence of
$x^2,xy,y^2$ modulo $\rad^3C$. Thus every choice of arrow basis has
an arrow with two nonzero successors, contradicting the
special-biserial condition required of a gentle algebra.
\end{enumerate}
\end{example}

\begin{remark}[Earlier special cases]
\label{rem:earlier-special-cases}
The simple-projective ideal criterion, the case in which $\rad A_A$
is semisimple projective, and the one-simple-block case all follow
from Theorem \ref{thm:projective-support-absorption} or Corollary
\ref{cor:homogeneous-radical}. Hence finite-dimensional commutative
algebras and arbitrary local Artin algebras are retained as special
cases, but they are not the principal applications here.
\end{remark}

We next specialize the criterion to symmetric algebras. Assume that
$A$ is basic symmetric. Choose simple modules
$S_1,\dots,S_n$ and indecomposable projectives $P_i=e_iA$ so that
$\soc P_i\simeq S_i$. For a module $T$, define its projective
footprint by
\[
                       \Pi(T)=\{[S_i]\mid P_i\in\add T\}.
\]
Since $\Pi(T)\subseteq\supp\soc T$, Corollary \ref{cor:hitting} gives
the following finite criterion.

\begin{corollary}
\label{cor:symmetric-footprint}
Let $A$ be a basic symmetric algebra and $T$ a self-orthogonal
$\tau$-tilting $A$-module. If $\Pi(T)$ is an ideal-top hitting set,
then the basic representative of $\add T$ is isomorphic to $A$.
\end{corollary}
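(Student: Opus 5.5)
Since \(\Pi(T)\subseteq\supp\soc T\) and \(\Pi(T)\) is an ideal-top hitting set, \(\supp\soc T\) is also an ideal-top hitting set: every member of \(\mathfrak T(A)\) meeting \(\Pi(T)\) meets the larger set. The plan is to apply Corollary \ref{cor:hitting} to conclude that \(T\) is \(1\)-tilting, and then use the symmetry of \(A\) to identify \(\add T\) with \(\add A\).

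First I would justify the inclusion \(\Pi(T)\subseteq\supp\soc T\). If \(P_i\in\add T\), then \(P_i\) is a direct summand of some \(T^m\). Hence \(S_i\simeq\soc P_i\) is a simple submodule of \(T^m\), and so of \(T\). Therefore \([S_i]\in\supp\soc T\). Corollary \ref{cor:hitting} then shows that \(T\) is a classical \(1\)-tilting \(A\)-module.

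Next I would use that a symmetric algebra is self-injective, so every module of finite projective dimension is projective. Indeed, if \(0\to P_1\to P_0\to X\to0\) is a projective resolution of length one, then \(P_1\) is injective. The sequence therefore splits and \(X\) is projective. Applying this to the \(1\)-tilting module \(T\) gives \(T\) projective. Finally, a tilting module has exactly \(n=|\Sim(A)|\) pairwise nonisomorphic indecomposable summands, and all of them are projective. Thus every \(P_i\) occurs in \(\add T\), and the basic representative of \(\add T\) is \(P_1\oplus\cdots\oplus P_n\simeq A\).

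I do not expect a real obstacle here. The only point needing care is the count of summands of a \(1\)-tilting module: this is classical, and it also follows from \(|T|=n\) for \(\tau\)-tilting modules. No further use of the symmetry beyond \(\soc P_i\simeq S_i\) and self-injectivity is needed.
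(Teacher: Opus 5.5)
Your proposal is correct and follows the paper's argument: the inclusion $\Pi(T)\subseteq\supp\soc T$ lets you invoke Corollary~\ref{cor:hitting}, and self-injectivity then forces the $1$-tilting module $T$ to be projective. Your summand count via $|T|=n$ is a slightly more explicit justification of the paper's one-line claim that $T$ is a projective generator.
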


\begin{proof}
Corollary \ref{cor:hitting} implies that $T$ is $1$-tilting. Over a
self-injective algebra, every module of finite projective dimension
is projective. Hence $T$ is a projective generator, and its basic
form is $A$.
\end{proof}

This criterion is deliberately stated as a
consequence rather than the main result: depending on the bimodule
socle of a symmetric algebra, it may be much stronger than the
socle-support condition of Corollary \ref{cor:hitting}.

We give a consolidated list of restrictions. It can be used
as an initial filter in a computational search for a counterexample
to the self-orthogonal $\tau$-tilting conjecture.

\begin{proposition}
\label{prop:counterexample-profile}
Suppose that $T$ is a self-orthogonal $\tau$-tilting $A$-module which
is not $1$-tilting, and put $I=\Ann_A(T)$. Then:
\begin{enumerate}[\rm(i)]
\item $I$ is a nonzero nilpotent two-sided ideal;
\item $\Hom_A(I,T)=0$ and $\Ext_A^n(I,T)=0$ for every $n\geq1$;
\item $\Hom_A(\Tors_A(I),\Sub T)=0$ and
      $\Tors_A(I)\cap\Sub T=\{0\}$;
\item $\supp\topm I\cap\supp\soc T=\varnothing$;
\item for every submodule $L\subseteq I$ and every $n\geq0$,
\[
          \Ext_A^{n+1}(I/L,T)\simeq\Ext_A^n(L,T).
\]
\end{enumerate}
If $A$ is basic symmetric, then $\supp\topm I$ also avoids the
projective footprint $\Pi(T)$.
\end{proposition}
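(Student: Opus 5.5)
The plan is to assemble the proposition from results already proved, item by item, and to spend the only real work on the symmetric-algebra addendum.

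For (i), recall the standard facts quoted before Theorem \ref{thm:orthogonality}. The annihilator $I=\Ann_A(T)$ of a $\tau$-tilting module is a nilpotent two-sided ideal. Moreover, $T$ is $1$-tilting exactly when $I=0$. Since $T$ is assumed not to be $1$-tilting, $I\neq0$; equivalently, invoke Corollary \ref{cor:sub-equivalence}.

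The remaining items then follow in order:
\begin{itemize}
\item Item (ii) is Theorem \ref{thm:orthogonality}.
\item Item (iii) is Theorem \ref{thm:torsion-separation}.
\item Item (iv) is Corollary \ref{cor:top-socle}.
\item Item (v) is Proposition \ref{prop:ann-shift}, with the convention $\Ext_A^0=\Hom_A$ so that the case $n=0$ reads $\Ext_A^1(I/L,T)\simeq\Hom_A(L,T)$.
\end{itemize}
None of these steps needs new input.

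For the symmetric addendum, assume $A$ is basic symmetric. Index the simples and projectives as in the discussion before Corollary \ref{cor:symmetric-footprint}, so that $\soc P_i\simeq S_i$. I will show $\Pi(T)\subseteq\supp\soc T$ and then apply (iv).
\begin{itemize}
\item If $[S_i]\in\Pi(T)$, then $P_i\in\add T$, so $P_i$ is a direct summand of some $T^m$.
\item Hence $S_i\simeq\soc P_i$ embeds in $T^m$, and by semisimplicity of the socle, $S_i$ embeds in $T$.
\item Therefore $[S_i]\in\supp\soc T$.
\end{itemize}
Combining this with (iv) gives $\supp\topm I\cap\Pi(T)\subseteq\supp\topm I\cap\supp\soc T=\varnothing$.

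The only point needing any care is the identification $\soc P_i\simeq S_i$ for a basic symmetric algebra. This holds because a symmetric algebra is weakly symmetric, so $\topm P_i\simeq\soc P_i$. This is where the symmetry hypothesis is used; everything else is bookkeeping.
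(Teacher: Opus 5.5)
Your proposal is correct and matches the paper's proof: items (i)--(v) are read off from the faithful $\tau$-tilting criterion, Theorem~\ref{thm:orthogonality}, Theorem~\ref{thm:torsion-separation}, Corollary~\ref{cor:top-socle} and Proposition~\ref{prop:ann-shift}, and the symmetric addendum follows from $\Pi(T)\subseteq\supp\soc T$ together with (iv). The only difference is that you justify this inclusion explicitly via $\soc P_i\simeq S_i$, whereas the paper simply asserts it.
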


\begin{proof}
Part (i) follows from sincerity of $\tau$-tilting modules and the
failure of faithfulness.  Parts (ii)--(v) are Theorems
\ref{thm:orthogonality} and \ref{thm:torsion-separation}, Corollary
\ref{cor:top-socle}, and Proposition \ref{prop:ann-shift},
respectively. In the symmetric case,
$\Pi(T)\subseteq\supp\soc T$.
\end{proof}

We end the paper with the following concrete problems suggested by the separation theorem.

\begin{question}
For which bound quiver algebras does every nonzero nilpotent ideal
$J$ satisfy
$\Tors_A(J)\cap\Sub(A/J)\neq\{0\}$? Can this property be recognized
from the path poset and the minimal relations, without enumerating
two-sided ideals?
\end{question}

\begin{question}
Can a nonzero nilpotent ideal and a $\tau$-tilting module satisfy all
conditions in Proposition \ref{prop:counterexample-profile}
simultaneously? A negative answer in a stable class of algebras would
give a new route from self-orthogonality to tilting.
\end{question}

{\bf Acknowledgements} The research of the author is supported by NSFC(Nos. 12171207, 12371038). The author wants to thank Professors Hongxing Chen, Xiao-Wu Chen, Zhi-Wei Li, Shengyong Pan and Zhibing Zhao for useful suggestions.

\end{document}